\newtheorem{mythm}{Theorem}
\newtheorem{mylemma}[mythm]{Lemma}
\newtheorem{myremark}[mythm]{Remark}
\DeclareMathOperator{\mydiv}{div}
\DeclareMathOperator{\myspan}{span}
\numberwithin{equation}{section}
\title{Quantum Algorithms for  Multiscale Partial Differential Equations}
\author[1]{Junpeng Hu\thanks{hjp3268@sjtu.edu.cn}}
\author[1,2]{Shi Jin\thanks{shijin-m@sjtu.edu.cn}}
\author[1]{Lei Zhang\thanks{lzhang2012@sjtu.edu.cn}}
\affil[1]{School of Mathematical Sciences, Institute of Natural Sciences, MOE-LSC, Shanghai Jiao Tong University, Shanghai, 200240, P. R. China}
\affil[2]{Shanghai Artificial Intelligence Laboratory, Shanghai, China}
\begin{document}
\maketitle

\begin{abstract}
Partial differential equation (PDE) models with multiple temporal/spatial scales are prevalent in several disciplines such as physics, engineering, and many others. These models are of great practical importance but notoriously difficult to solve due to prohibitively small mesh and time step sizes limited by the scaling parameter and CFL condition. Another challenge in scientific computing could come from curse-of-dimensionality.  In this paper, we aim to provide a quantum algorithm, based on either direct approximations of the original PDEs or their homogenized models,  for prototypical  multiscale problems in partial differential equations (PDEs), including elliptic, parabolic and hyperbolic PDEs. To achieve this, we will lift these problems to higher dimensions and leverage the recently developed Schr\"{o}dingerization based  quantum simulation algorithms \cite{jin2022quantum} to efficiently reduce the computational cost of the resulting high-dimensional and multiscale problems. We will examine the error contributions arising from discretization, homogenization, and relaxation, analyze and compare the complexities of these algorithms   in order to identify the best algorithms in terms of complexities for different equations in different regimes.

\end{abstract}

\tableofcontents

\section{Introduction}
\label{sec:introduction}

Scientific computing faces two major challenges when dealing with high-dimensional problems such as the $N$-body Schr\"{o}dinger equation and multiscale problems such as physical processes in highly heterogeneous media. These problems are difficult to solve using classical numerical methods due to the `curse of dimensionality' or  the need to use prohibitively small mesh sizes and time steps to numerically resolve  the small physical scales. Quantum computing, on the other hand,  has shown great potential in solving partial differential equations (PDEs) in very high space dimension \cite{berry2014high, cao2013quantum, berry2017quantum, childs2021high, childs2020quantum, costa2019quantum, liu2021efficient, jin2022time, jin2022quantum, jin2022timenonlinear}.  In this paper, we focus on  quantum algorithms for  multiscale PDEs. To achieve this, we lift these problems to higher dimensions, via the Sch\"odingerization technique \cite{jin2022quantum}, and take advantage of the reduced computational cost offered by quantum algorithms for high-dimensional problems.


Quantum computing has garnered considerable interest in the past few years owing to its potential to offer up to exponential acceleration compared to classical computational methods. A quantum device is best-suited for Hamiltonian simulation, referred to as {\it quantum simulation}, as it can utilize its own evolution to directly prepare the outputs of Schr\"odinger's equations \cite{feynman2018simulating}. The objective of Hamiltonian simulation is to implement the unitary operation $e^{-iHt}$ with a precision of $\delta>0$, where $t$ is the evolution time and $H$ is a given $m$-qubit Hermitian matrix with a size of $2^m \times 2^m$. The precision $\delta$ is measured by the diamond norm distance.

Lloyd's seminal work \cite{lloyd1996universal} introduced the first explicit quantum simulation algorithm, enabling the simulation of Hamiltonians that consist of local interaction terms. Aharonov and Ta-Shma \cite{aharonov2003adiabatic} proposed an efficient simulation algorithm for the more general class of sparse Hamiltonians, and subsequent works have further improved simulations \cite{childs2004quantum,berry2007efficient,wiebe2011simulating,childs2010relationship,berry2009black,childs2012hamiltonian,berry2015simulating,berry2014exponential,berry2015hamiltonian,berry2020time,zhao2022hamiltonian}. In \cite{berry2015hamiltonian}, Berry et al. combined the quantum walk approach \cite{childs2010relationship,berry2009black} with the fractional-query approach \cite{berry2014exponential}, resulting in a reduced query complexity of $O\left( \tau \frac{\log(\tau/\delta)}{\log\log(\tau/\delta)} \right)$, where $\tau := s \left| H \right|_{\max}t$. This approach had a near-linear dependence on the sparsity $s$ and evolution time $t$, and offered exponential speedup over $\delta$. They also established a lower bound demonstrating that this result is near-optimal for scaling in either $\tau$ or $\delta$ independently.

Achieving quantum speedup for ordinary or partial differential equations (ODEs/PDEs) that are not Schr\"odinger type is of significant importance in scientific and engineering applications. One common approach is to discretize the spatial and temporal domains, transforming the linear PDEs into a system of linear algebraic equations, and then use quantum algorithms to solve the corresponding system \cite{harrow2009quantum,cao2012quantum,childs2017quantum}. This can potentially lead to polynomial or super-polynomial speedups in solving PDEs with quantum algorithms \cite{berry2014high,berry2017quantum,childs2021high}. Jin et al. studied the time complexity of quantum difference methods for linear and nonlinear high-dimensional and multiscale PDEs--in the framework of Asymptotic-Preserving schemes-- in \cite{jin2022time,jin2022quantumobservables,jin2022timenonlinear}. A recent approach called `Schr\"odingerisation' has been proposed by Jin et al. in \cite{jin2022quantum,jin2022quantumdetail} to solve linear PDEs by converting them into a  system of Schr\"odinger equations using a simple transformation called the warped phase transformation. This allows quantum simulations for general linear ODEs and PDEs. An alternative approach is given in \cite{an2023linear}.


We begin with multiscale elliptic equations, which  arise in various applications, such as modeling conduction problems in composite materials with periodic structures. One of the most commonly studied examples is the second-order elliptic equation of divergence form:
\begin{equation}
\begin{aligned}
-\nabla \cdot(A (x) \nabla u (x)) &=f(x), \quad & & x \in D, \
u (x) &=0, & & x \in \partial D,
\end{aligned}
\label{eqn:darcy}
\end{equation}
where $0<\alpha\leq A(x) \leq \beta$ for all $x \in D$, and the forcing term $f\in H^{-1}(D;\mathbb{R})$. The setup $A(x) \in L^\infty$ allows for rough coefficients with fast oscillations, high contrast ratios with $\beta/\alpha\gg 1$, and even a continuum of non-separable scales.

In this paper, we consider the canonical multiscale model with $A\left(x, \frac{x}{\varepsilon_1}, \cdots, \frac{x}{\varepsilon_n}\right)$, which exhibits scale separation in the problem. In this paper by "canonical" we refer to the original models. The setup details can be found in Section \ref{sec:elliptic}, where $n\geq 1$ denotes the number of fine scales. Asymptotic homogenization \cite{bensoussan2011asymptotic} is a powerful analytical technique for solving multiscale PDEs with scale separation. In the classical two-scale homogenization setup with $n=1$, we assume that the coefficient $A(x)$ has the form $A(x) = A(x, x/\varepsilon)$ with $\varepsilon\ll 1$, and $A(\cdot, \cdot)$ is periodic in the second variable. Let $u_\varepsilon$ be the corresponding solution, as $\varepsilon\to 0$, we expect $u_\varepsilon$ to converge to $u_0$, where $u_0$ is the solution of an elliptic equation with homogenized coefficient $A_0$. (Higher order) correctors can be used to obtain higher order approximations of $u_\varepsilon$ in terms of $\varepsilon$. For $n>1$, reiterated homogenization can be performed over a hierarchy of scales to construct the reiterated homogenization model. For a comprehensive understanding of qualitative and quantitative homogenization theory and recent developments, see \cite{allaire1992homogenization,armstrong2016lipschitz,avellaneda1987compactness,bensoussan2011asymptotic,niu2018convergence,shen2017boundary,shen2018periodic,suslina2013homogenization}. We also note that, in the past two decades, significant progress has been made in the field of numerical homogenization \cite{Eming2005analysis,weinan2007heterogeneous,abdulle2012heterogeneous,Hou1999,MalPet:2014,Owhadi2007,eh09}, with the development of methods such as heterogeneous multiscale method (HMM) and multiscale finite element (MsFEM).


Analytical approaches, such as periodic unfolding \cite{cioranescu2008periodic}, suggest that a low-dimensional multiscale PDE can be transformed into a high-dimensional PDE. As a consequence, numerical methods based on tensor representations have emerged as powerful tools for efficiently computing multiscale solutions. Among these methods, the sparse tensor finite element method \cite{Harbrecht2011} and the quantic tensor train (QTT) method \cite{Kazeev2022} are prominent examples. By constructing high-dimensional basis functions that are adapted to the multiscale structures, these methods provide a low-rank tensor representation of the solution, resulting in a significant reduction in computational cost. Such approach can be extended to multiscale parabolic and wave equations as well \cite{tan2019high,donato2012periodic,xia2014high}. 

In the present work, we employ reiterated homogenization \cite{AllaireBriane1996} to obtain the high dimensional homogenization model from the canonical multiscale model, and develop a new quantum algorithm to solve them. The algorithm involves three steps: first, we apply the finite element method to the corresponding model and obtain a linear algebraic system $\mathbf{A}\mathbf{u}=\mathbf{F}$; second, we consider $\mathbf{u}$ as the steady state solution $\mathbf{u}_{\infty}$ of a linear differential equation $\mathbf{u}_t = -\mathbf{A}\mathbf{u} + \mathbf{F}$ and approximate it with the solution $\mathbf{u}(t)$ for large enough $t$; finally, we calculate $\mathbf{u}(t)$ using the quantum simulation of ODEs via Schr\"odingerisation \cite{jin2022quantum}.

By comparing the classical cost and quantum cost of the $(n+1)$-scale canonical elliptic model with dimension $d$ and the corresponding homogenization model, one of our main result is given in the following theorem. Throughout the paper, we will denote $C$ the generic constant and the notation $\tilde{O}$ means some logarithmic terms are ignored.
\begin{mythm}[multiscale elliptic equations]\label{thm:quantum:advantages:elliptic}
Under the same assumptions as in Theorem \ref{thm:reiterated:error} and for multiple scales $\varepsilon_k=\varepsilon_1^k$, $k=1,\cdots,n$, if a precision threshold of $\delta = O(\varepsilon_1)$ is prescribed, then
\begin{enumerate}
    \item quantum advantages are possible on $\varepsilon_1$ when $(n+1)d>3$ for the homogenization elliptic model and $(n+1)(d-1)>2$ for the canonical elliptic model;
    \item either the classical or quantum cost of the homogenization model is smaller than the corresponding classical or quantum cost of the canonical model w.r.t. $\varepsilon_1$;
    \item the quantum algorithm for the homogenization model is most recommended when $(n+1)d>3$.
\end{enumerate}
\end{mythm}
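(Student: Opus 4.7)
The plan is to assemble closed-form complexity bounds for each of the four algorithmic combinations---canonical vs.\ homogenized model, classical vs.\ quantum solver---in the form $\tilde{O}(\varepsilon_1^{-\alpha})$ after fixing the precision budget at $\delta = O(\varepsilon_1)$, and then to read off the three conclusions as arithmetic comparisons of the exponents $\alpha$. All the error analysis is imported from earlier sections, in particular Theorem \ref{thm:reiterated:error} for the homogenization error together with the standard FEM a priori estimate, so the proof is essentially complexity bookkeeping once the dependence on $\varepsilon_1$ of mesh size, sparsity, spectral norm, condition number and relaxation time is pinned down.

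First I would analyze the canonical model. Because the coefficient $A(x,x/\varepsilon_1,\ldots,x/\varepsilon_n)$ oscillates on the finest scale $\varepsilon_n=\varepsilon_1^n$, the FEM mesh must satisfy $h\lesssim \varepsilon_1^n$ to resolve it, and combined with the tolerance $\delta=O(\varepsilon_1)$ this gives $N_{\mathrm{can}}\sim\varepsilon_1^{-nd}$ degrees of freedom, hence a classical solver cost scaling as $\tilde{O}(\varepsilon_1^{-\alpha^{\mathrm{cl}}_{\mathrm{can}}})$. For the quantum cost I would recast $\mathbf{A}\mathbf{u}=\mathbf{F}$ as the steady state of $\mathbf{u}_t=-\mathbf{A}\mathbf{u}+\mathbf{F}$ and plug the sparsity $s=O(1)$, the spectral norm $\|\mathbf{A}\|\lesssim h^{-2}$, the relaxation time $T\lesssim \kappa(\mathbf{A})\log(1/\delta)$, and the tolerance $\delta$ into the Schr\"odingerization complexity of \cite{jin2022quantum}, producing a bound $\tilde{O}(\varepsilon_1^{-\alpha^{\mathrm{q}}_{\mathrm{can}}})$.

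Next I would repeat the same analysis for the $(n+1)d$-dimensional reiterated homogenization model. Here the lifted coefficient is periodic in the fine variables, no resolution of $\varepsilon_k$ is needed in the fast directions, and by Theorem \ref{thm:reiterated:error} the homogenization error is $O(\varepsilon_1)$, matching the tolerance; the FEM error then forces only $h\sim\varepsilon_1$, giving $N_{\mathrm{hom}}\sim\varepsilon_1^{-(n+1)d}$ and classical cost $\tilde{O}(\varepsilon_1^{-(n+1)d})$, while the Schr\"odingerization recipe applied to the lifted system yields a quantum cost $\tilde{O}(\varepsilon_1^{-\alpha^{\mathrm{q}}_{\mathrm{hom}}})$ with a small exponent depending only on $\delta$ and on the condition number of the lifted operator, essentially independent of $n$ and $d$ once the logarithmic dimension factors are absorbed.

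With all four exponents in hand, the theorem reduces to arithmetic. Claim (1) is the pair of inequalities $\alpha^{\mathrm{cl}}_{\mathrm{hom}}>\alpha^{\mathrm{q}}_{\mathrm{hom}}\Leftrightarrow (n+1)d>3$ and $\alpha^{\mathrm{cl}}_{\mathrm{can}}>\alpha^{\mathrm{q}}_{\mathrm{can}}\Leftrightarrow (n+1)(d-1)>2$; claim (2) compares the canonical and homogenized exponents column by column using that the lifted classical/quantum exponents grow only through fixed powers of $\varepsilon_1$ while the canonical ones pick up an extra factor of $\varepsilon_1^{-n}$ from resolving $\varepsilon_n$; and claim (3) then singles out $\alpha^{\mathrm{q}}_{\mathrm{hom}}$ as the minimum of the four exponents exactly in the regime $(n+1)d>3$. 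The main obstacle will be the careful bookkeeping inside the Schr\"odingerization cost: the condition number $\kappa(\mathbf{A})$, the integration time required to drive the parabolic relaxation within $\delta$, and the CFL-type stability factors must be tracked consistently across the two meshes so that the thresholds $3$ and $2$ emerge exactly, and the discretization, homogenization and relaxation errors must simultaneously be kept below the single budget $\delta=O(\varepsilon_1)$; the remainder is routine exponent arithmetic.
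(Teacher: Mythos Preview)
Your overall plan is exactly the paper's: compute the four complexities by tracking mesh size, sparsity, condition number and relaxation time through the CG and Schr\"odingerization cost formulas, then compare exponents. The paper carries this out in Section~\ref{sec:complexity} and Table~\ref{tab:cost:comparison}, obtaining classical/quantum exponents $\tfrac{(n+1)(d+1)}{2}$ and $n+2$ for the canonical model, and $\tfrac{(n+1)d+1}{2}$ and $2$ for the homogenized model, from which the thresholds $3$ and $2$ follow by straight arithmetic.

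There is, however, a concrete slip in your mesh-size bookkeeping that would derail the exponent arithmetic. For the canonical model you take ``$h\lesssim\varepsilon_1^n$ to resolve the coefficient,'' and for the homogenized model you take ``$h\sim\varepsilon_1$.'' But the binding constraint is the $L^2$ FEM error, which is \emph{second order}: $\|u_\varepsilon-u_\varepsilon^h\|_{L^2}=O(h^2\|u_\varepsilon\|_{H^2})=O(h^2/\varepsilon_n)$ in the canonical case (Theorem~\ref{thm:canonical:error} with $|u_\varepsilon|_{H^2}=O(1/\varepsilon_n)$) and $O(h^2)$ in the lifted case. With $\delta=O(\varepsilon_1)$ and $\varepsilon_n=\varepsilon_1^n$ this gives $h=O(\varepsilon_1^{(n+1)/2})$ and $h=O(\varepsilon_1^{1/2})$ respectively---each a square root coarser than what you wrote. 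Your choices would yield, for instance, a quantum homogenization cost $\tilde O(\varepsilon_1^{-3})$ rather than $\tilde O(\varepsilon_1^{-2})$, and the quantum-advantage threshold would come out as $(n+1)d>2$ rather than $(n+1)d>3$. Once you correct $h$ to the $L^2$-optimal scale, the four exponents in Table~\ref{tab:cost:comparison} fall out and claims (1)--(3) are immediate.
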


We also study multiscale  parabolic and wave equations. For these problems, however, the theoretical results on the convergence of homogenization solutions for general $(n+1)$-scale cases are still under development.  We will
focus on the 2-scale case $A_\varepsilon(x)=A\left(\frac{x}{\varepsilon}\right)$. The comparison results we obtained between classical and quantum algorithms are presented below.
\begin{mythm}[multiscale parabolic and wave equations]\label{thm:quantum:advantages:parabolicwave}
Under the same assumptions as in Theorem \ref{thm:reiterated:error} and for two-scale case $\varepsilon = \varepsilon_1$, if a precision threshold of $\delta = O(\varepsilon_1)$ is prescribed, then
\begin{enumerate}
    \item quantum advantages are possible on $\varepsilon_1$ when $d \geq 1$ for the parabolic equation and the homogenized wave equation but 
    $d>1$ for the canonical wave equation.
    \item Both the classical or quantum costs for the homogenized and canonical parabolic equations are of the same order over $\varepsilon_1$, while the cost of the homogenized parabolic equation grows more quickly w.r.t. the dimension $d$ compared with its canonical counterpart.
    \item the quantum costs for the homogenized and canonical wave equations are of the same order over $\varepsilon_1$, while the classical cost of the homogenized wave equation has a higher order than  that of the canonical wave equation.
\end{enumerate}
\end{mythm}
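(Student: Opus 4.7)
The plan is to mirror the three-step strategy used for the elliptic case but with two crucial modifications that distinguish parabolic and hyperbolic settings from the elliptic one: the appearance of a genuine time variable (and its attendant time-discretization error) and, in the wave case, a CFL-type constraint coupling the time step to the spatial mesh. Throughout, I will assume the error decomposition
\begin{equation*}
\|u - u_h^n\| \;\le\; \underbrace{\|u_\varepsilon - u_0\|}_{\text{homogenization}} \;+\; \underbrace{\|u_0 - u_{0,h}\|}_{\text{spatial FEM}} \;+\; \underbrace{\|u_{0,h} - u_{0,h}^n\|}_{\text{time discretization}} \;+\; \underbrace{\|u_{0,h}^n - \tilde{u}\|}_{\text{Schr\"odingerization relaxation}},
\end{equation*}
with the obvious modification (no homogenization term) for the canonical model. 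I then set each contribution to be $O(\varepsilon_1)$ and read off the required mesh size $h$, time step $\tau$, total time $T$, and relaxation parameter.

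First, I would handle the parabolic cases. For the canonical equation one needs $h = O(\varepsilon_1)$ to resolve the oscillatory coefficient, whereas the homogenized equation has smooth coefficients and only needs $h = O(\varepsilon_1^{1/p})$ where $p$ is the spatial FEM order chosen so that $h^p = O(\varepsilon_1)$. The parabolic time step is unconstrained by CFL (implicit schemes), so $\tau$ is chosen only to match the required tolerance. The resulting dimensions of the linear ODE system are $N_{\text{can}} = O(\varepsilon_1^{-d})$ versus $N_{\text{hom}} = O(\varepsilon_1^{-d/p})$; however, since in the homogenized setting one lifts the problem to dimension $(n+1)d$ via reiterated homogenization, the homogenized system inherits that dimension and the degrees of freedom scale accordingly. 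Classical cost is polynomial in $N$ times the number of time steps, while the Schr\"odingerization cost from \cite{jin2022quantum} is essentially $\tilde{O}(\mathrm{poly}\log N)$ in $N$ multiplied by the sparsity, norm of the Hamiltonian and evolution time. Comparing these gives claim (2) directly: same $\varepsilon_1$-order, but the homogenized cost grows faster in $d$ because of the lifted dimension.

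Next I would treat the wave equation, where the new ingredient is the CFL condition $\tau \le C h$ for explicit schemes (or the analogous stability condition for implicit schemes, where one still needs $\tau = O(h)$ for accuracy at wave-propagation scales). For the canonical wave model, $h = O(\varepsilon_1)$ therefore forces $\tau = O(\varepsilon_1)$, so the total number of time steps over $T = O(1)$ is $O(\varepsilon_1^{-1})$ and the number of spatial DOFs is $O(\varepsilon_1^{-d})$; the classical cost is $O(\varepsilon_1^{-(d+1)})$ while the quantum cost via Schr\"odingerization scales as $\tilde{O}(\varepsilon_1^{-1})$ up to logarithmic factors in $N$. For the homogenized wave model, $h$ is determined by the FEM convergence rate for smooth coefficients plus the homogenization error $O(\varepsilon_1)$, giving a larger $h$ but lifted dimension $(n+1)d$; again classical cost grows more quickly in $d$ but the quantum cost involves only $\log N$, making it insensitive to that lifting. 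Comparing the exponents of $\varepsilon_1$ in quantum cost versus classical cost then determines the quantum-advantage threshold: $d \ge 1$ suffices for parabolic and homogenized wave, but one needs $d > 1$ to beat the classical $O(\varepsilon_1^{-(d+1)})$ for the canonical wave equation, yielding claim (1) and claim (3).

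The main obstacle, I expect, is not the asymptotic bookkeeping but the underlying homogenization estimates in the time-dependent setting: for the parabolic and hyperbolic problems one must cite or prove an $L^2$- or energy-norm convergence rate of the form $\|u_\varepsilon - u_0 - \varepsilon\,\text{corrector}\| = O(\varepsilon)$, uniformly in time over the relevant interval, which is well-known in the two-scale setting but much more delicate for $(n+1)$-scale reiterated homogenization (hence the restriction to $n=1$ in the statement). A secondary difficulty is ensuring that, after Schr\"odingerization, the Hamiltonian sparsity and spectral norm entering the complexity bound of \cite{jin2022quantum} do not absorb the quantum advantage; here I would argue that the FEM stiffness matrices remain sparse with $\|H\| = O(h^{-2})$ in the parabolic case and $O(h^{-1})$ in the wave case, and carefully track these through the final cost comparison.
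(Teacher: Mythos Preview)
Your high-level plan (error decomposition, set each term to $O(\varepsilon_1)$, read off parameters, compare classical vs.\ quantum cost) is the right shape, but there is a concrete gap in the quantum cost estimate that undermines the conclusion.

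You treat the quantum step as Schr\"odingerisation applied directly to the semi-discrete evolution $d\mathbf{u}/dt=-\mathbf{A}\mathbf{u}+\mathbf{F}$, and you emphasize a CFL constraint as the distinguishing feature of the wave case. The paper does neither. Because the FEM discretization produces $\mathbf{M}\,d\mathbf{u}/dt=-\mathbf{A}\mathbf{u}+\mathbf{F}$ (resp.\ $\mathbf{M}\,d^2\mathbf{u}/dt^2=\cdots$) with a nontrivial mass matrix, one cannot Schr\"odingerise this ODE directly. Instead the paper discretizes in time with an \emph{implicit} scheme (implicit Euler for parabolic, implicit midpoint for wave, so no CFL condition), stacks all $N_T$ time levels into one large linear system $\mathcal{A}\mathbf{U}=\mathcal{F}$, and then runs the steady-state Schr\"odingerisation on $d\mathbf{U}/dt=-\mathcal{A}\mathbf{U}+\mathcal{F}$. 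The relevant quantities are therefore $s(\mathcal{A})$, $\|\mathcal{A}\|_{\max}$ and $\lambda_{\min}(\mathcal{A})$ for this stacked matrix, and the required relaxation time $t=O(\log(1/\delta)/\lambda_{\min}(\mathcal{A}))$ together with $\Delta p=O(\delta)$.

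Tracking these (with $\Delta t=O(h)$ from the time-discretization error, not from CFL) yields a quantum cost of order $\tilde O(\varepsilon_1^{-2})$ in all four parabolic/wave cases, not the $\tilde O(\varepsilon_1^{-1})$ you state for the canonical wave equation. This matters: with classical cost $\tilde O(\varepsilon_1^{-(d+1)})$ and quantum cost $\tilde O(\varepsilon_1^{-2})$, the ratio is $\varepsilon_1^{-(d-1)}$, which is exactly what produces the threshold $d>1$ in claim~(1). Your numbers ($\tilde O(\varepsilon_1^{-1})$ quantum) would instead give a ratio $\varepsilon_1^{-d}$ and hence quantum advantage already for $d\ge 1$, contradicting the very claim you are trying to prove. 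Similarly, the contrast in claim~(3) between the classical orders $\varepsilon_1^{-(d+1)}$ (canonical wave) and $\varepsilon_1^{-(d+3/2)}$ (homogenized wave) comes from the $\sqrt{\kappa(\mathcal{A})}$ factor in the CG cost, with $\kappa$ differing by a factor $h^{-1}$ between the two stacked systems; your proposal, which attributes the difference to dimension lifting and CFL, does not capture this.
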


 We remark here that for both classical and quantum algorithms, the homogenized models do not provide any advantages over their canonical counterparts, for the case of $n=1$. Here the cost of homogenized model is due to the lift to high dimensions, also it is well-known that for the two scale homogenization, the cost to compute the cell problems and the homogenized model is similar to that of canonical problem, with the reduction coming possibly  from multiple right hand sides \cite{Engquist2008}.

\paragraph{Organization} The paper is structured as follows. In Section \ref{sec:quantum}, we introduce a new quantum subroutine and demonstrate its numerical convergence. Section \ref{sec:elliptic} provides an overview of the canonical multiscale elliptic model and the reiterated homogenization theory, as well as a review of standard finite element methods and an estimation of matrix condition numbers. In Sections \ref{sec:parabolic} and \ref{sec:hyperbolic}, we extend the analysis to multiscale parabolic PDEs and wave equations, respectively. We will analyze the complexities for all three equations, as presented in Section \ref{sec:complexity}, and conclude in Section \ref{sec:conclusion} with a discussion of our results.

\section{Quantum Algorithms}
\label{sec:quantum}

It is widely accepted that solving high-dimensional problems using classical computers can be challenging. However, with the help of quantum algorithms, lifting a low-dimensional partial differential equation to higher dimensions may prove beneficial.   In this context, our aim is to demonstrate the advantages of quantum algorithm for the solution of multiscale problems using the Schr\"{o}dingerisation method, which was proposed not only for solving linear PDEs/ODEs but also  some linear algebra problems by iterative methods \cite{JinLiu-LA}.

We begin by obtaining the linear algebraic equations using finite element methods. This process is discussed in the end of this section and the details of finite element formulations are presented in Section \ref{sec:elliptic}, Section \ref{sec:parabolic} and Section \ref{sec:hyperbolic} for elliptic, parabolic and wave equations respectively. 
We can directly solve the multiscale equations using finite element methods. Alternatively, for problems with scale separation, we can use the approach proposed in \cite{AllaireBriane1996} to construct a high-dimensional homogenization model and use tensor product finite element spaces to solve the resulting high-dimensional problem. Once this has been done, one is left with the task of solving the linear system $\mathbf{A}\mathbf{u}=\mathbf{F}$.

We introduce a quantum algorithm to solve this problem. The solution $\mathbf{u}$ can be interpreted as the steady-state solution $\mathbf{u}_{\infty}$ of the following equation:
\begin{equation}\label{eqn:quantum:model}
    \frac{d\mathbf{u}(t)}{dt} = -\mathbf{A}\mathbf{u}(t) + \mathbf{F},
\end{equation}
which can be solved by quantum simulation via Schr\"odingerisation\cite{jin2022quantum} provided that $\mathbf{A}$ is {\it positive
semi-definite} for stability. To see this, we rewrite \eqref{eqn:quantum:model} as 
\begin{equation}
    \left\{\begin{aligned}
        &\frac{d\mathbf{\tilde{u}}(t)}{dt} = -\mathbf{\tilde{A}}\mathbf{\tilde{u}}(t), \\
        &\mathbf{\tilde{u}}(0) = \mathbf{\tilde{u}}_0,
    \end{aligned}\right., \quad \mathbf{\tilde{u}} = \begin{bmatrix}
    \mathbf{\tilde{u}} \\
    v
    \end{bmatrix}, \quad \mathbf{\tilde{A}} = \begin{bmatrix}
    \mathbf{A} & \mathbf{F}(t) \\
    \mathbf{0}^T & 0
    \end{bmatrix}, \quad \mathbf{\tilde{u}}_0 = \begin{bmatrix}
    \mathbf{\tilde{u}}_0 \\
    1
    \end{bmatrix}.
\end{equation}
where $\mathbf{\tilde{A}}$ can be decomposed into a Hermitian term and an anti-Hermitian term
\begin{equation}
    \mathbf{\tilde{A}} = \mathbf{H}_1 + i\mathbf{H}_2, \quad \mathbf{H}_1 = \frac{\mathbf{\tilde{A}} + \mathbf{\tilde{A}}^\dagger}{2}, \quad \mathbf{H}_2 = \frac{\mathbf{\tilde{A}} - \mathbf{\tilde{A}}^\dagger}{2i}.
\end{equation}
Using the warped phase transformation $\mathbf{v}(t,p)=e^{-p}\mathbf{\tilde{u}}(t)$ for $p>0$ and symmetrically extending to $p<0$ as in \cite{jin2022quantumdetail}, the ODEs are then transferred to a Hamiltonian system after the discrete Fourier transformation
\begin{equation}\label{eqn:quantum:schrodingermodel}
    i\frac{d}{dt} \mathbf{w}(t) = (\mathbf{H}_1\otimes \mathbf{D} + \mathbf{H}_2\otimes \mathbf{I})\mathbf{w} =: \mathbf{H}_{total} \mathbf{w},
\end{equation}
where $\mathbf{D}$ is the diagonal matrix representation of the momentum operator $-i\partial_x$ in the original variables and $\mathbf{w}$ collects all the grid values of $\mathbf{v}(t,p)$ with grid size $\Delta p$, defined by
\begin{equation}
    \mathbf{w} = [\mathbf{w}_{0};\mathbf{w}_{1};\cdots;\mathbf{w}_{K}], \quad \mathbf{w}_{k} = \mathbf{v}(t,p_k), \quad p_k = -p_{\max} + k\Delta p, \quad k=0,\cdots,K=2p_{\max}/\Delta p.
\end{equation}
It is easy to check that 
\begin{equation}
    s(\mathbf{H}_{total}) = O(s(\mathbf{\tilde{A}})) = O(s(\mathbf{A})), \quad \|\mathbf{H}_{total}\|_{\max} \leq \|\mathbf{H}_1\|_{\max}/\Delta p + \|\mathbf{H}_2\|_{\max} = O(\|\mathbf{\tilde{A}}\|_{\max}/\Delta p).
\end{equation}
In the situations we have been considering in the previous sections, $\|\mathbf{\tilde{A}}\|_{\max} = \|\mathbf{A}\|_{\max}$ since the maximum value of $\mathbf{F}$ is smaller than $\|\mathbf{A}\|_{\max}$.

The following Lemma elucidates the computational complexity of solving  \eqref{eqn:quantum:schrodingermodel}.

\begin{mylemma}{\cite[Lemma 3.1]{jin2022quantumdetail}}\label{thm:quantum:hamiltonian}
An $s$-sparse Hamiltonian $H_{total}$ acting on $m_{H_{total}}$ qubits can be simulated within error $\delta$ with
\begin{equation}
    \mathcal{O}\left( \tau \frac{\log(\tau/\delta)}{\log\log(\tau/\delta)} \right)
\end{equation}
queries and 
\begin{equation}
    \mathcal{O}\left( \tau(m_{H_{total}}+\log^{2.5}(\tau/\delta)) \frac{\log(\tau/\delta)}{\log\log(\tau/\delta)} \right) = \mathcal{O}(\tau m_{H_{total}} \cdot \mathrm{polylog})
\end{equation}
additional $2$-qubits gates, where $\tau=s\|H_{total}\|_{\max}t$ and $t$ is the evolution time, and
\begin{equation}
    {\mathrm{polylog}} \equiv \log^{2.5}(\tau/\delta) \frac{\log(\tau/\delta)}{\log\log(\tau/\delta)}.
\end{equation}
This result is near-optimal.
\end{mylemma}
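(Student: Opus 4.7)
The plan is to reconstruct, at a sketch level, the argument of Berry, Childs and Kothari, since the lemma as stated is a direct transcription of their near-optimal sparse Hamiltonian simulation result. I would proceed in three stages: build a quantum walk that encodes $H_{total}$, use a compressed linear combination of powers of this walk to implement $e^{-iH_{total}t}$, and then account for the ancillary gate overhead that produces the extra $m_{H_{total}}+\log^{2.5}(\tau/\delta)$ factor.

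First, assuming the standard sparse-access oracles for $H_{total}$ (one returning the column index of the $j$-th nonzero in a given row, the other returning the matrix entry), I would construct the Szegedy-type walk operator $U$ acting on $O(m_{H_{total}})$ qubits. Each application of $U$ costs $O(1)$ oracle queries and $O(m_{H_{total}}+\mathrm{polylog}(1/\delta))$ additional two-qubit gates, the polylog overhead coming from arithmetic on oracle outputs and controlled-rotation subcircuits. The essential spectral property is that, on the relevant invariant subspace, the eigenphases of $U$ are $\pm\arcsin(\lambda_k/(s\|H_{total}\|_{\max}))$, where $\lambda_k$ are the eigenvalues of $H_{total}$; this is what couples the walk to genuine Hamiltonian evolution.

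Second, to synthesize $e^{-iH_{total}t}$, I would split the evolution into $r=O(\tau)$ segments of duration $\Delta t = 1/(s\|H_{total}\|_{\max})$, and for each segment use a truncated Jacobi--Anger-type expansion $e^{-iH_{total}\Delta t}\approx \sum_{k=-K}^{K} c_k U^k$, realized by a linear-combination-of-unitaries circuit followed by one round of oblivious amplitude amplification. Choosing $K=O(\log(\tau/\delta)/\log\log(\tau/\delta))$ suffices because the Bessel coefficients $J_k$ decay super-exponentially for $|k|\gg 1$. Multiplying the per-segment cost $O(K)$ by the number of segments yields the stated query complexity, and multiplying the per-segment gate cost by $r$ gives the gate count after absorbing the LCU prepare/select overhead.

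The hard part of carrying out this argument in full is the two-sided error analysis: one must verify both that the $K$-term truncation attains the prescribed diamond-norm error $\delta/r$ per segment (through tail bounds on $J_k(\tau)$) and that the LCU coefficients $c_k$ remain well-normalized so that a single round of oblivious amplitude amplification restores the target state with the claimed fidelity. Finally, the assertion that the bound is near-optimal I would not reprove, but rather quote from \cite{berry2015hamiltonian}, where matching lower bounds in $\tau$ and in $\delta$ are established by a no-fast-forwarding reduction from parity to Hamiltonian simulation.
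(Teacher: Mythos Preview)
Your sketch is a faithful outline of the Berry--Childs--Kothari construction and would, if filled in, establish the lemma. However, the paper does not supply a proof of this statement at all: the lemma is quoted verbatim as \cite[Lemma 3.1]{jin2022quantumdetail}, which in turn cites \cite{berry2015hamiltonian}, and is used as a black-box input to the subsequent complexity analysis. So there is no ``paper's own proof'' to compare against; your proposal goes well beyond what the authors attempt, reconstructing the underlying simulation algorithm rather than simply invoking it.
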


Finally, we need to estimate the error between $\mathbf{u}(t)$ and the steady state $\mathbf{u}_{\infty}$, which is demonstrated in the following theorem.

\begin{mythm}\label{thm:quantum:exp:error}
Given $\mathbf{A}$ in \eqref{eqn:quantum:model} with eigenvalues $\lambda_{\max}(\mathbf{A}) \geq \cdots \lambda_{\min}(\mathbf{A}) > 0$, then $\mathbf{u}(t)$ converges to the steady state $\mathbf{u}_{\infty}$ in the sense
\begin{equation}
    \|\mathbf{u}(t) - \mathbf{u}_{\infty}\|_{2} \leq e^{-\lambda_{\min}t} \|\mathbf{u}_0 - \mathbf{u}_{\infty}\|_{2},
\end{equation}
where $\|\cdot\|_2$ denotes the vector $l_2$ norm normalized by its dimension, i.e., $\|\mathbf{u}\|_2^2 = \sum_{i=1}^{N}\mathbf{u}_i^2/N$ for $\mathbf{u}\in\mathbb{R}^N$. Also, $\mathbf{w}(t)$ converges to the steady state $\mathbf{w}_{\infty}$ in the sense
\begin{equation}
    \|\mathbf{w}(t) - \mathbf{w}_{\infty}\|_{2} \leq e^{-\lambda_{\min}t} \|\mathbf{w}(0) - \mathbf{w}_{\infty}\|_{2}.
\end{equation}
\end{mythm}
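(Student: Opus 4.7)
The plan is to reduce both estimates to a single exponential-decay bound for the homogeneous linear system obtained by subtracting the steady state, and then to transfer that bound to the Schr\"odingerized vector $\mathbf{w}$ via the explicit warped-phase relation defining $\mathbf{v}(t,p)$.

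First I would observe that positivity of every eigenvalue of $\mathbf{A}$ makes $\mathbf{A}$ invertible, so \eqref{eqn:quantum:model} has a unique steady state $\mathbf{u}_\infty = \mathbf{A}^{-1}\mathbf{F}$. Setting $\mathbf{e}(t) := \mathbf{u}(t) - \mathbf{u}_\infty$ and subtracting $\mathbf{A}\mathbf{u}_\infty = \mathbf{F}$ from \eqref{eqn:quantum:model} yields the homogeneous system $\mathbf{e}'(t) = -\mathbf{A}\mathbf{e}(t)$, whose solution is $\mathbf{e}(t) = e^{-\mathbf{A}t}\mathbf{e}(0)$. Because $\mathbf{A}$ is the (symmetric) positive definite stiffness matrix arising from the FEM discretizations described in Sections \ref{sec:elliptic}--\ref{sec:hyperbolic}, a spectral decomposition furnishes the Euclidean operator-norm bound $\|e^{-\mathbf{A}t}\| = e^{-\lambda_{\min}t}$. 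The dimension-normalization in the chosen $\|\cdot\|_2$ cancels on both sides, so $\|\mathbf{e}(t)\|_2 \leq e^{-\lambda_{\min}t}\|\mathbf{e}(0)\|_2$ follows immediately, which is the first bound.

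For the second bound I would use the warped-phase representation $\mathbf{v}(t,p) = e^{-|p|}\tilde{\mathbf{u}}(t)$ underlying the construction of $\mathbf{w}$, so that on the $p$-grid $\mathbf{w}_k(t) = e^{-|p_k|}\tilde{\mathbf{u}}(t)$ and $\mathbf{w}_{k,\infty} = e^{-|p_k|}\tilde{\mathbf{u}}_\infty$, giving componentwise
$$\mathbf{w}_k(t) - \mathbf{w}_{k,\infty} = e^{-|p_k|}\bigl(\tilde{\mathbf{u}}(t) - \tilde{\mathbf{u}}_\infty\bigr).$$
Because the trailing $v$-component of $\tilde{\mathbf{u}}$ is conserved by the zero row of $\tilde{\mathbf{A}}$, one has $\tilde{\mathbf{u}}(t) - \tilde{\mathbf{u}}_\infty = (\mathbf{u}(t)-\mathbf{u}_\infty,0)^T$, and hence $\|\tilde{\mathbf{u}}(t) - \tilde{\mathbf{u}}_\infty\|_2 = \|\mathbf{u}(t) - \mathbf{u}_\infty\|_2$ in the dimension-normalized norm. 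Plugging into the normalized $\|\cdot\|_2$ over the combined grid factors out a common weight $(K+1)^{-1}\sum_k e^{-2|p_k|}$ that is the same for the numerator and the denominator of $\|\mathbf{w}(t)-\mathbf{w}_\infty\|_2/\|\mathbf{w}(0)-\mathbf{w}_\infty\|_2$. The ratio therefore reduces to $\|\mathbf{u}(t)-\mathbf{u}_\infty\|_2/\|\mathbf{u}(0)-\mathbf{u}_\infty\|_2$, which is bounded by $e^{-\lambda_{\min}t}$ by the first step.

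The only delicate points I anticipate are bookkeeping ones: tracking the dimension factors in the normalized $\ell_2$ norm so the $p$-grid weights cancel cleanly, and verifying that the zero eigenvalue of $\tilde{\mathbf{A}}$ (coming from the trivial last row used to absorb $\mathbf{F}$) does not contaminate the $\mathbf{u}$-block, which still inherits the decay rate $\lambda_{\min}(\mathbf{A})$. Beyond these, the statement is the standard Lyapunov-type exponential-decay estimate for a dissipative self-adjoint generator, with the warped-phase transformation acting as a pointwise $p$-weight that preserves the structure of the error.
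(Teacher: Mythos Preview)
Your proposal is correct and essentially mirrors the paper's argument. The only cosmetic difference is in the first bound: you invoke the explicit solution $\mathbf{e}(t)=e^{-\mathbf{A}t}\mathbf{e}(0)$ together with the spectral operator-norm estimate $\|e^{-\mathbf{A}t}\|=e^{-\lambda_{\min}t}$, whereas the paper derives the differential inequality $\frac{d}{dt}\|\mathbf{e}(t)\|_2^2\le -2\lambda_{\min}\|\mathbf{e}(t)\|_2^2$ and applies Gr\"onwall; both routes are one-line exercises for a symmetric positive definite $\mathbf{A}$ and yield the same constant. For the second bound your argument is the paper's argument verbatim---reduce $\|\mathbf{w}(t)-\mathbf{w}_\infty\|_2^2$ to a $p$-grid weight times $\|\tilde{\mathbf{u}}(t)-\tilde{\mathbf{u}}_\infty\|_2^2$ and observe the weight cancels in the ratio. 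One tiny bookkeeping correction: with the dimension-normalized norm one has $\|\tilde{\mathbf{u}}(t)-\tilde{\mathbf{u}}_\infty\|_2^2=\tfrac{N}{N+1}\|\mathbf{u}(t)-\mathbf{u}_\infty\|_2^2$, not literal equality, but as you note this time-independent factor cancels and does not affect the conclusion.
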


\begin{proof}
The relaxation error decays as
\begin{equation}
\begin{aligned}
    \frac{d}{dt}\|\mathbf{u}(t) - \mathbf{u}_{\infty}\|_{2}^2 &= \frac{2}{N} (\mathbf{u}(t) - \mathbf{u}_{\infty})^T (-\mathbf{A}\mathbf{u}(t) + F) \\
    &= \frac{2}{N} (\mathbf{u}(t) - \mathbf{u}_{\infty})^T (-\mathbf{A}\mathbf{u}(t) + \mathbf{A}\mathbf{u}_{\infty}) \\
    &\leq -2\lambda_{\min}\|\mathbf{u}(t) - \mathbf{u}_{\infty}\|_2^2.
\end{aligned}
\end{equation}
Applying Gr\"onwall's inequality, we obtain
\begin{equation}
\|\mathbf{u}(t) - \mathbf{u}_{\infty}\|_2^2 \leq e^{-2\lambda_{\min}t}\|\mathbf{u}(0) - \mathbf{u}_{\infty}\|_2^2.
\end{equation}
The convergence of $\mathbf{w}(t)$ is equivalent to $\mathbf{u}(t)$ since
\begin{equation}\label{eqn:quantum:errorw}
\begin{aligned}
    \|\mathbf{w}(t) - \mathbf{w}_{\infty}\|_{2}^{2} &= \frac{1}{K+1} \sum_{k=0}^{K} \left\|\mathbf{v}(t,p_{k}) - \mathbf{v}_{\infty}(p_{k})\right\|_{2}^2 \\
    &= \frac{1}{K+1}\sum_{k=0}^{K} e^{-2|p_k|} \left\| \mathbf{\tilde{u}}(t) - \mathbf{\tilde{u}}_{\infty} \right\|_{2}^{2} \\
    &=C \|\mathbf{u}(0) - \mathbf{u}_{\infty}\|_2^2,
\end{aligned}
\end{equation}
where $C = \frac{N}{(N+1)(K+1)}\sum_{k=0}^{K} e^{-2|p_k|} = O(1)$.

\end{proof}

Now let's return to the application of finite element methods to solve multiscale problems. For the elliptic equation discussed in Section \ref{sec:elliptic}, we naturally obtain the form $\mathbf{A}\mathbf{u}=\mathbf{F}$. For the parabolic equation in Section \ref{sec:parabolic}, we obtain an ODE $\mathbf{M}d\mathbf{u}/dt = -\mathbf{A}\mathbf{u}+\mathbf{F}$. Due to the presence of the matrix $\mathbf{M}$, we can not apply quantum simulation via Schr\"odingerisation immediately. After time discretization and collection of approximate solutions at all times, this is transformed into a higher-dimensional ($N_T$ times larger) system $\mathcal{A}\mathbf{U}=\mathcal{F}$. For the wave equation in Section \ref{sec:hyperbolic}, we obtain $\mathbf{M}d^2\mathbf{u}/dt^2 = -\mathbf{A}\mathbf{u}+\mathbf{F}$ and introduce a new variable $[\mathbf{u};\mathbf{v}]$, where $\mathbf{v}=d\mathbf{u}/dt$, then treat it similarly as the parabolic case to obtain $\mathcal{A}\mathbf{U}=\mathcal{F}$.   We note that in this case, the final output is the quantum state of $\mathbf{u}$ that includes the linear combinations of $\mathbf{u} $ at {\it all} time steps, while for the elliptic
case the final output is that quantum state of $\mathbf{u}$ only at the output time. This will result in different costs (much less for the elliptic case) of quantum measurements if one wants to extra classical data at the output time.

In the upcoming sections, we will present  detailed formulations for multiscale elliptic, parabolic, and wave equations, respectively. We will analyze the key parameters $\|\mathbf{A}\|_{\max}$ and $\lambda_{\min}(\mathbf{A})$ (or $\|\mathcal{A}\|_{\max}$ and $\lambda_{\min}(\mathcal{A})$) that affect the performance of the quantum algorithm. Furthermore, we will also examine the error contributions arising from discretization, homogenization, and relaxation. A key point to emphasize is that the matrix $\mathbf{A}$ or $\mathcal{A}$ must be positive semi-definite, which will be demonstrated by the eigenvalue estimates in the following sections. 

\begin{myremark}
For the canonical parabolic equation \eqref{eqn:parabolic:model} and wave equation \eqref{eqn:hyperbolic:model}, one can perform finite difference methods instead of finite element methods. For instance, the central difference methods provide an ODE in the form of $d\mathbf{u}(t)/dt = -\mathbf{A}\mathbf{u}+\mathbf{F}$. This ODE can be directly used for quantum simulation via Schr\"odingerisation after checking that $\mathbf{A}$ is positive definite. Nevertheless, we can not obtain an ODE in such form for the homogenized equation \eqref{eqn:two-scale:parabolic:homogenizedmodel} and \eqref{eqn:two-scale:hyperbolic:homogenizedmodel}.
\end{myremark}

\section{Multiscale Elliptic  PDEs}
\label{sec:elliptic}

In this section, we consider the canonical elliptic model with coefficient 
\begin{equation}
  A_\varepsilon(x):=A\left( x, \frac{x}{\varepsilon_1}, \cdots, \frac{x}{\varepsilon_n} \right) \quad \forall x \in \Omega,
\end{equation}
where the set of microstructure length scales is denoted by $\varepsilon=\{\varepsilon_k, k=1,\dots,n\}$, and the reference unit cell is $Y = [0 ,1]^d$. We suppose that the spatial domain $\Omega \subseteq \mathbb{R}^d$, with a sequence of periodicity over $Y_k :=\varepsilon_k Y$. The elliptic tensor $A_\varepsilon(x) \in L^\infty(\Omega)^{d^2}$, 
where $A(x,y_1,\dots,y_k)$ is $Y_k$-periodic with respect to the variable $y_k$, and satisfies the following condition, for some $\beta\geq\alpha>0$
\begin{equation}\label{eqn:assump:elliptic}
    \alpha |\xi|^2 \leq A(x,y_1,\dots,y_n)\xi\cdot\xi \leq \beta |\xi|^2 \quad \text{a.e. in } \Omega \times Y_1\times \cdots \times Y_k \text{ for any } \xi \in \mathbb{R}^d.
\end{equation}
The goal is to find $u_\varepsilon$, the solution of 
\begin{equation}\label{eqn:canonical:model}
  \left\{ \begin{aligned}
  & - \mydiv A_\varepsilon \nabla u_\varepsilon = f \quad &\text{in } \Omega, \\
  & u_\varepsilon = 0 \quad & \text{on } \partial \Omega,
  \end{aligned}\right.
\end{equation}
where $f\in L^2(\Omega)$ is a given source term. Under the above mentioned standard hypothesis on the tensor $A_\varepsilon$, this problem is known to have a unique solution in $H_0^1(\Omega)$. 

The computational cost to fully resolve the solution to \eqref{eqn:canonical:model} at the finest scale $\epsilon_n$ with classical numerical methods such as finite element methods, is typically prohibitively high which scales with $(1/\varepsilon_n)^d$. To overcome this limitation, we introduce the homogenization approach, which yields approximate governing equations that are solvable independently of the small parameter $\varepsilon$. Specifically, homogenization of \eqref{eqn:canonical:model} involves an asymptotic analysis of \eqref{eqn:canonical:model} as $\varepsilon$ tends to zero.

\subsection{Two-scale homogenization model}
To better demonstrate the homogenization approach, we begin with a two-scale model with coefficient $A(x, x/\varepsilon)$ and use the formal two-scale asymptotic expansion to formulate the homogenization model. Given the assumptions on the coefficients, we seek a solution $u_{\varepsilon}$ in the form
\begin{equation}\label{eqn:two-scale:expansion}
    u_{\epsilon}(x) = u_0(x,x/\varepsilon) + \varepsilon u_1(x,x/\varepsilon) + \varepsilon^2 u_2(x,x/\varepsilon) + \cdots,
\end{equation}
where the functions $u_j(x,y)$ are defined on $\Omega\times \mathbb{R}^d$ and $Y$-periodic in $y$ for any fixed $x\in\Omega$. Note that if $\phi_\varepsilon(x)=\phi(x,y)$ with $y=x/\varepsilon$, then $\partial_{x_j}\phi_\varepsilon = \partial_{x_j}\phi + \partial_{y_j}\phi / \varepsilon$.

By substituting \eqref{eqn:two-scale:expansion} into \eqref{eqn:canonical:model} and identifying the powers of $\varepsilon$, it follows that
\begin{equation}\label{eqn:two-scale:power}
    \begin{aligned}
        &O(\varepsilon^{-2}): &-\mydiv_{y} A(x,y)\left(\nabla_y u_0(x,y)\right) = 0, \\
    &O(\varepsilon^{-1}): &-\mydiv_{y} A\left( \nabla_x u_0(x,y) + \nabla_y u_1(x,y) \right) -\mydiv_{x} A(x,y)\left(\nabla_y u_0(x,y)\right) = 0, \\
    &O(\varepsilon^{0}): &-\mydiv_{y} A\left( \nabla_x u_1(x,y) + \nabla_y u_2(x,y) \right) -\mydiv_{x} A(x,y)\left(\nabla_x u_0(x,y)+\nabla_y u_1(x,y)\right) = f(x).
    \end{aligned}
\end{equation}
Exploiting the periodicity of $u_0(x,y)$ in $y$ and the ellipticity of $A(x,y)$, one can deduce that $\nabla_{y} u_0(x,y) \equiv 0$, which implies that $u_0(x,y)$ does not depend on $y$, i.e.,
\begin{equation}
    u_0(x,y) \equiv u_0(x).
\end{equation}

By integrating the $O(\varepsilon_1^{0})$ terms over $y\in Y$, one can obtain the two-scale homogenized system by collecting the $O(\varepsilon^{-1})$ and $O(\varepsilon^{0})$ terms in \eqref{eqn:two-scale:power}.
\begin{equation}\label{eqn:two-scale:homogenizedmodel}
    \left\{ \begin{aligned}
    &-\mydiv_{y} A(x,y)\left( \nabla_x u_0(x) + \nabla_y u_1(x,y) \right) = 0, \\
    &-\mydiv_{x} \left[ \int_{Y}  A(x,y)\left( \nabla_x u_0(x) + \nabla_y u_1(x,y) \right) dy \right] = f(x), \\
    &u_1 \text{ is $Y$-periodic with respect to $y$}.
    \end{aligned}\right.
\end{equation}


If $f\in L^2(\Omega)$, it is known that as $\varepsilon\rightarrow 0$, $u_\varepsilon$ converges to $u_0$ weakly in $H^1(\Omega)$ and strongly in $L^2(\Omega)$.  Here we recall some rigorous error estimates of two-scale expansions for the Dirichlet problems from \cite{bensoussan2011asymptotic}.

\begin{mythm}{\cite{bensoussan2011asymptotic,Hoang2005high}}\label{thm:two-scale:error}
Given $u_\varepsilon$ the solution of \eqref{eqn:canonical:model} and $u_0,u_1$ the solution of \eqref{eqn:two-scale:homogenizedmodel}, we can assume that there exists $p>2$ such that $\partial_{x_i} u_0 \in L^p(\Omega)$, then
\begin{equation}
    \left\| u_\varepsilon(x) - u_0(x) \right\|_{L^\infty(\Omega)} \leq C\varepsilon,
\end{equation}
\begin{equation}
    \left\| u_\varepsilon(x) - u_0(x) - \varepsilon \eta_\varepsilon u_1\left(x, \frac{x}{\varepsilon}\right) \right\|_{H_0^1(\Omega)} \leq C \varepsilon^{1/2p^\prime}, \quad \frac{1}{p} + \frac{1}{p^\prime} = 1,
\end{equation}
where $\eta_\varepsilon$ is an $\varepsilon$-smoothing operator. In particular, assuming that $A(x,y)\in C^{\infty} (\bar{\Omega}, C_{\#}^{\infty}(Y))_{sym}^{d\times d}$ and that the homogenized solution $u(x)$ belongs to $H^2(\Omega)$, then
\begin{equation}
    \left\| u_\varepsilon(x) - u_0(x) - \varepsilon u_1\left(x, \frac{x}{\varepsilon}\right) \right\|_{H^1(\Omega)} \leq C \varepsilon^{1/2}.
\end{equation}
\end{mythm}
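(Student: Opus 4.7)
The plan is to follow the classical two-scale asymptotic expansion strategy combined with energy estimates, handling the boundary layer via the $\varepsilon$-smoothing operator $\eta_\varepsilon$. First I would make the corrector $u_1$ explicit by introducing the cell-problem solutions $\chi_j(x,y)$, $j=1,\dots,d$, defined as the unique $Y$-periodic (mean-zero) solutions of
\begin{equation*}
 -\operatorname{div}_y\!\bigl(A(x,y)(e_j+\nabla_y \chi_j(x,y))\bigr)=0 \quad \text{in } Y,
\end{equation*}
so that $u_1(x,y)=\sum_j \chi_j(x,y)\,\partial_{x_j}u_0(x)$ solves the first equation of the homogenized system \eqref{eqn:two-scale:homogenizedmodel}, and the effective tensor $A_0(x)=\int_Y A(x,y)(I+\nabla_y\chi(x,y))\,dy$ appears naturally when integrating the second equation in $y$.

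For the $H^1$ estimate (item 2), I would form the adjusted error
\begin{equation*}
 r_\varepsilon(x) \;=\; u_\varepsilon(x) - u_0(x) - \varepsilon\,\eta_\varepsilon u_1\!\left(x,\tfrac{x}{\varepsilon}\right),
\end{equation*}
where the smoothing $\eta_\varepsilon$ (a convolution with a mollifier of scale $\varepsilon$, cut off near $\partial\Omega$) ensures that $\varepsilon\,\eta_\varepsilon u_1(\cdot,\cdot/\varepsilon)$ is well defined in $H^1_0(\Omega)$ and vanishes on the boundary, so that $r_\varepsilon\in H^1_0(\Omega)$. Next I would compute $-\operatorname{div}(A_\varepsilon\nabla r_\varepsilon)$: by the choice of cell problems and the definition of $A_0$, the leading $O(\varepsilon^{-1})$ and $O(1)$ terms cancel out, leaving a residual which splits into (i) an interior part of order $\varepsilon$ involving $\partial_{x_i}\eta_\varepsilon\partial_{x_j}u_0-\partial_{x_i}\partial_{x_j}u_0$ and $\varepsilon\partial_x\chi_j$-type commutators, and (ii) a boundary-layer part supported in an $O(\varepsilon)$-neighbourhood of $\partial\Omega$ coming from the cutoff in $\eta_\varepsilon$. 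Testing with $r_\varepsilon$ itself and using coercivity \eqref{eqn:assump:elliptic}, the interior part contributes $O(\varepsilon)$ via standard mollifier estimates, while the boundary part is controlled by $\|\nabla u_0\|_{L^p(\Omega_\varepsilon)}|\Omega_\varepsilon|^{1/2-1/p}$ where $\Omega_\varepsilon$ is the $\varepsilon$-tube around $\partial\Omega$; since $|\Omega_\varepsilon|=O(\varepsilon)$ this gives the claimed exponent $\varepsilon^{1/(2p')}$ upon applying Cauchy--Schwarz and H\"older.

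For the $L^\infty$ estimate (item 1), I would invoke the uniform $W^{1,\infty}$ (Avellaneda--Lin) or $L^\infty$ compactness bounds available for the operator $-\operatorname{div}(A_\varepsilon\nabla\cdot)$ combined with the $H^1$ error: more concretely, representing $u_\varepsilon-u_0$ via the Green's function of $-\operatorname{div}(A_\varepsilon\nabla\cdot)$ and exploiting that $u_0+\varepsilon u_1(\cdot,\cdot/\varepsilon)$ is an $O(\varepsilon)$ Dirichlet defect, one obtains $\|u_\varepsilon-u_0\|_{L^\infty}\le C\varepsilon\|u_0\|_{W^{1,\infty}}$. For the sharper $\varepsilon^{1/2}$ rate in item 3, the smoothing can be removed since under $A\in C^\infty(\bar\Omega,C^\infty_\#(Y))$ and $u_0\in H^2(\Omega)$ the corrector $u_1(x,x/\varepsilon)$ already lies in $H^1(\Omega)$; repeating the residual computation without $\eta_\varepsilon$ leaves only the boundary-layer defect $\varepsilon u_1(\cdot,\cdot/\varepsilon)|_{\partial\Omega}$, whose $H^{1/2}(\partial\Omega)$ lift has $H^1$-norm $O(\varepsilon^{1/2})$ by a standard trace-tube estimate.

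The main obstacle is the boundary-layer contribution: $u_1(x,x/\varepsilon)$ does not vanish on $\partial\Omega$, which forces either the $\eta_\varepsilon$-smoothing (yielding the suboptimal exponent $1/(2p')$) or the trace-tube argument (yielding $1/2$) and prevents any gain beyond $\varepsilon^{1/2}$ without introducing a true boundary-layer corrector. Handling this obstruction carefully, together with tracking the $L^p$ integrability of $\nabla u_0$ in the $O(\varepsilon)$-tube, is exactly where the Meyers-type regularity $p>2$ enters and where the final exponent is determined.
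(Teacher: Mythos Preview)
The paper does not prove this theorem; it is stated as a recalled result with an explicit citation to \cite{bensoussan2011asymptotic,Hoang2005high} and is followed immediately by the next subsection without any proof environment. So there is no ``paper's own proof'' to compare against.

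Your sketch is broadly the classical route taken in those references: introduce the cell correctors $\chi_j$, form the adjusted error $r_\varepsilon$, compute the residual of $-\mydiv(A_\varepsilon\nabla r_\varepsilon)$, and split into interior and boundary-layer contributions, with the Meyers exponent $p>2$ controlling the tube integral of $\nabla u_0$. That is indeed how the $H^1$ corrector estimates in items~2 and~3 are obtained in the cited literature, and your identification of the boundary layer as the obstruction determining the exponent is correct.

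One caution on item~1: the bound $\|u_\varepsilon-u_0\|_{L^\infty(\Omega)}\le C\varepsilon$ is not a direct consequence of Avellaneda--Lin $W^{1,\infty}$ compactness plus the $H^1$ error in the way you describe; that combination would only give $\|u_\varepsilon-u_0\|_{L^\infty}=O(1)$. The $O(\varepsilon)$ rate in $L^\infty$ requires either a maximum-principle argument on the full two-scale expansion (as in Bensoussan--Lions--Papanicolaou, under stronger smoothness of $A$ and $u_0$ than you have assumed) or uniform Green's-function pointwise bounds applied to the residual equation for $u_\varepsilon-u_0-\varepsilon u_1$, not for $u_\varepsilon-u_0$ alone. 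Your Green's-function remark is closer to the right idea but would need the corrector included in the representation, not just treated as a ``Dirichlet defect''.
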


\subsection{Reiterated homogenization}
For the canonical multiscale model with $n\geq 2$ fine scales, Allaire and Briane proposed the so-called reiterated homogenization method, developed in \cite{AllaireBriane1996} based on the pioneering work \cite{allaire1992homogenization,Nguetseng1989}. Assume that the elliptic tensor $A_{\varepsilon}$ has multiple scales such that 
\begin{equation}
    A_{\varepsilon}(x) = A\left( x, \frac{x}{\varepsilon_1},\dots,\frac{x}{\varepsilon_n} \right),
\end{equation}
where $A(x,y_1,\dots,y_n)$ is $Y$-periodic with respect to each variable $y_k$. Each of these scales is microscopic in the sense that
\begin{equation}
    \lim_{\varepsilon\rightarrow 0} \varepsilon_k = 0, \quad \lim_{\varepsilon\rightarrow 0} \frac{\varepsilon_{k+1}}{\varepsilon_{k}} = 0, \quad 1\leq k\leq n-1.
\end{equation}
The so-called $(n+1)$-scale homogenized system (composed of $(n+1)$ p.d.e.) is written as
\begin{equation}\label{eqn:reiterated:homogenizedmodel}
    \left\{ \begin{aligned}
    &-\mydiv_{y_n} A\left( \nabla_x u_0(x) + \sum_{j=1}^n \nabla_{y_j} u_j \right) = 0, \\
    &-\mydiv_{y_{k}} \left[ \int_{Y_{k+1}}\dots\int_{Y_{n}}  A\left( \nabla_x u_0(x) + \sum_{j=1}^n \nabla_{y_j} u_j \right) dy_{k+1}\dots dy_{n} \right] = 0, \quad 1\leq k \leq n-1,\\
    &-\mydiv_{x} \left[ \int_{Y_{1}}\dots\int_{Y_{n}}  A\left( \nabla_x u_0(x) + \sum_{j=1}^n \nabla_{y_j} u_j \right) dy_{1}\dots dy_{n} \right] = f,\\
    &u_j \text{ is $Y$-periodic with respect to $y_k$}.
    \end{aligned}\right.
\end{equation}
Allaire and Briane proved the multiscale convergence of $u_\varepsilon$ with the following theorem.
\begin{mythm}{\cite{AllaireBriane1996}}
Assume that the solution $(u_0,u_1,\cdots,u_n)$ of the $(n+1)$-scale homogenized problem \eqref{eqn:reiterated:homogenizedmodel} is smooth, say $u_k\in L^2[\Omega;C_{\#}^1(Y_1\times\cdots\times Y_k)]$ for all $k\in\{1,\cdots,n\}$. Then, one has the following corrector result
\begin{equation}
    \left[ u_\varepsilon(x) - u_0(x) - \sum_{k=1}^{n}\varepsilon_ku_k\left( x,\frac{x}{\varepsilon_1},\cdots,\frac{x}{\varepsilon_k} \right) \right] \rightarrow 0 \quad \text{strongly in } H^1(\Omega).
\end{equation}

\end{mythm}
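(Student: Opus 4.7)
The plan is to run the energy method with the full asymptotic ansatz
\[
v_\varepsilon(x) \; := \; u_0(x) + \sum_{k=1}^{n}\varepsilon_k\, u_k\!\left(x,\tfrac{x}{\varepsilon_1},\ldots,\tfrac{x}{\varepsilon_k}\right)
\]
as a trial approximation to $u_\varepsilon$, and then exploit the hierarchy of cell equations in \eqref{eqn:reiterated:homogenizedmodel} to conclude that the residual is small in $H^1$. First I would differentiate $v_\varepsilon$ using the chain-rule identity $\partial_{x_i}\phi_\varepsilon = \partial_{x_i}\phi + \sum_k \varepsilon_k^{-1}\partial_{(y_k)_i}\phi$ and group the resulting terms into a leading contribution $G_\varepsilon(x) := \nabla_x u_0 + \sum_{k=1}^{n}\nabla_{y_k}u_k$ evaluated at $y_k = x/\varepsilon_k$, plus remainders of size $\varepsilon_k\nabla_x u_k$ and $(\varepsilon_k/\varepsilon_j)\nabla_{y_j}u_k$ for $j<k$. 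The assumption $\varepsilon_{k+1}/\varepsilon_k\to 0$ together with the $C^1$ regularity of each $u_k$ forces these remainders to vanish in $L^2(\Omega)$.

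Second, I would examine the flux $A_\varepsilon G_\varepsilon$. The three families of equations in \eqref{eqn:reiterated:homogenizedmodel} state exactly that $A\,G$, averaged over $Y_{k+1}\times\cdots\times Y_n$, is divergence-free in $y_k$ for each $k$, and that the fully averaged flux is balanced by $f$ at the macroscopic scale. Testing against an admissible $\varphi\in H_0^1(\Omega)$ and expanding scale by scale, the cross-terms produced at order $\varepsilon_k/\varepsilon_j$ cancel thanks to the corresponding cell equation, leaving a residual $\langle -\mydiv(A_\varepsilon\nabla v_\varepsilon)-f,\varphi\rangle = o(1)\,\|\varphi\|_{H_0^1}$. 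This identification step is the heart of the proof.

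Third, because the correctors $u_k$ do not vanish on $\partial\Omega$, the difference $u_\varepsilon - v_\varepsilon$ is not admissible as a test function. I would introduce a smooth cutoff $\theta_\varepsilon$ supported in $\Omega$, equal to $1$ outside a boundary layer of width $O(\varepsilon_1)$, and replace each corrector by $\theta_\varepsilon u_k$; this classical truncation costs only $o(1)$ in $H^1$. Denoting the truncated ansatz $\tilde v_\varepsilon$, the energy identity reads
\[
\int_\Omega A_\varepsilon \nabla(u_\varepsilon - \tilde v_\varepsilon)\cdot\nabla(u_\varepsilon - \tilde v_\varepsilon)\,dx \; = \; \int_\Omega f\,(u_\varepsilon - \tilde v_\varepsilon)\,dx - \int_\Omega A_\varepsilon \nabla \tilde v_\varepsilon\cdot\nabla(u_\varepsilon - \tilde v_\varepsilon)\,dx,
\]
and by the previous step the right-hand side is $o(1)\cdot\|\nabla(u_\varepsilon-\tilde v_\varepsilon)\|_{L^2}$. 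Coercivity from \eqref{eqn:assump:elliptic} then yields $\|\nabla(u_\varepsilon - \tilde v_\varepsilon)\|_{L^2}\to 0$, and the cutoff estimate transfers the conclusion back to the original ansatz $v_\varepsilon$.

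The main obstacle I expect is the bookkeeping in the second step: organizing the expansion of $A_\varepsilon\nabla v_\varepsilon\cdot\nabla\varphi$ into the $(n+1)$-level cascade so that each layer's leftover divergence is absorbed by the correct cell equation, while verifying that the cross-scale terms of size $\varepsilon_k/\varepsilon_j$ really are absorbed rather than accumulate as $n$ grows. This is where the strict separation hypothesis $\varepsilon_{k+1}/\varepsilon_k\to 0$ (as opposed to merely bounded ratios) and the $L^2(\Omega;C_\#^1)$ regularity of the correctors become essential. Extending the classical two-scale energy identity to $n$ nested scales, and doing it uniformly in the number of scales, is the genuinely technical part of the argument; the boundary-layer correction is standard by comparison.
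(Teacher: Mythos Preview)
The paper does not supply a proof of this statement at all: it is quoted verbatim from Allaire--Briane \cite{AllaireBriane1996} and invoked as a black box, so there is no in-paper argument to compare against. Your proposal is therefore not competing with anything in the present manuscript.

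As for the substance of your sketch, it follows the direct energy-method route (insert the full ansatz, control the residual via the hierarchy of cell problems, truncate at the boundary, conclude by coercivity). This is a legitimate strategy, but it differs from the original argument in \cite{AllaireBriane1996}, which is organised around the notion of \emph{multiscale convergence}: Allaire and Briane first prove a compactness theorem for $(n+1)$-scale convergence, identify the limit $(u_0,u_1,\dots,u_n)$ via oscillating test functions, and then upgrade weak to strong convergence by showing convergence of the energies $\int_\Omega A_\varepsilon\nabla u_\varepsilon\cdot\nabla u_\varepsilon \to \int_{\Omega\times Y_1\times\cdots\times Y_n} A\,G\cdot G$. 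Strong $H^1$ convergence of the corrector then drops out of the combination ``weak convergence $+$ convergence of norms''. Their route avoids the explicit scale-by-scale bookkeeping you flag as the main obstacle, at the price of setting up the multiscale-convergence machinery; your route is more hands-on but requires exactly the careful cascade accounting you anticipate. Both lead to the same result under the stated regularity $u_k\in L^2[\Omega;C_\#^1(Y_1\times\cdots\times Y_k)]$.
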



For a quantitative estimate in reiterated homogenization, we refer the reader to \cite{niu2020quantitative} for a recent result.

\begin{mythm}{\cite{niu2020quantitative}}\label{thm:reiterated:error}
Let $\Omega$ be a bounded $C^{1,1}$ domain in $\mathbb{R}^d$. Assume that A is elliptic, periodic and Lipschitz continuous. For $F \in L^2(\Omega; \mathbb{R}^m) $ and $f \in H^{3/2}(\partial \Omega; \mathbb{R}^m)$, let $u_{\varepsilon} \in H^1(\Omega; \mathbb{R}^m)$ be the solution of \eqref{eqn:canonical:model} and $u_0$ the solution of the homogenized problem \eqref{eqn:reiterated:homogenizedmodel}. Then
\begin{equation}
    \|u_{\varepsilon} - u_0\|_{L^2(\Omega)} \leq C \{ \varepsilon_1 + \varepsilon_2/\varepsilon_1 + \cdots + \varepsilon_n/\varepsilon_{n-1}\} \| u_0 \|_{H^2(\Omega)},
\end{equation}
where $C$ depends at most on $d$, $m$, $n$, $\mu$, $L$, and $\Omega$.
\end{mythm}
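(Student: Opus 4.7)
The plan is to prove the estimate by constructing a fully reiterated first-order two-scale ansatz with correctors at every microscopic scale, computing the residual equation it satisfies, and then using duality against a dual problem to upgrade an $H^{-1}$-type bound on the residual into the desired $L^2$ rate. The scaling factors $\varepsilon_k/\varepsilon_{k-1}$ are expected to appear naturally as the size of the cross-terms between adjacent scales in the expansion.

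First I would define the hierarchy of correctors $\chi_k^\alpha(x, y_1, \dots, y_k)$ for $k = 1, \dots, n$ and $\alpha = 1, \dots, d$ by solving the cell problems obtained from \eqref{eqn:reiterated:homogenizedmodel} iteratively from the finest scale $y_n$ down to $y_1$, and define the corresponding homogenized tensors $A_k$ at each intermediate level. With these in hand I would introduce the ansatz
\begin{equation*}
w_\varepsilon(x) \;=\; u_0(x) \;+\; \sum_{k=1}^{n} \varepsilon_k \, S_{\varepsilon_k}\!\Bigl[\chi_k\bigl(x, \tfrac{x}{\varepsilon_1}, \dots, \tfrac{x}{\varepsilon_k}\bigr) \cdot \nabla u_{k-1}(x)\Bigr],
\end{equation*}
where $S_{\varepsilon_k}$ is a Steklov-type smoothing operator at scale $\varepsilon_k$ used to bypass the $H^2$ regularity barrier of $u_0$ near $\partial \Omega$ and to keep the ansatz in $H_0^1$ after multiplication by a cutoff.

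Next I would compute $-\mathrm{div}(A_\varepsilon \nabla(u_\varepsilon - w_\varepsilon))$. Applying the usual scale-identification, all contributions at order $\varepsilon^{-1}$ and $\varepsilon^0$ at each level cancel because of the cell equations; what remains is a sum of terms that can be arranged in divergence form using the dual/flux correctors $b_k$ associated with each scale, plus cross-interaction terms between scale $k$ and scale $k+1$ of typical size $\varepsilon_{k+1}/\varepsilon_k$ (these come from differentiating a function oscillating at rate $\varepsilon_{k+1}^{-1}$ against a coefficient or corrector oscillating at rate $\varepsilon_k^{-1}$), plus boundary-layer terms of size $\varepsilon_1$ supported in a $\varepsilon_1$-neighborhood of $\partial \Omega$. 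Combining a standard energy estimate for the error with the Aubin–Nitsche duality trick against the homogenized problem $-\mathrm{div}(A_0 \nabla \varphi) = u_\varepsilon - u_0$ would convert the $H^{-1}$ size of the residual into an $L^2$ bound on $u_\varepsilon - u_0$, giving exactly the sum $\varepsilon_1 + \varepsilon_2/\varepsilon_1 + \cdots + \varepsilon_n/\varepsilon_{n-1}$ multiplied by $\|u_0\|_{H^2}$.

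I expect the hardest part to be the systematic construction of the dual (flux) correctors in the reiterated setting so that every cross-scale residual can be rewritten as $\mathrm{div}$ of something small in $L^2$, rather than merely $L^2$ itself; this is what produces the gain of one order and is precisely the mechanism that brings in the quotients $\varepsilon_{k+1}/\varepsilon_k$. A secondary technical difficulty is controlling the boundary layer created by the fact that $w_\varepsilon$ does not vanish on $\partial \Omega$: the $C^{1,1}$ regularity of $\Omega$ and the smoothing operators $S_{\varepsilon_k}$ are used here, together with the trace estimate that localizes the boundary-layer contribution to an $O(\varepsilon_1)$-thin strip, keeping it consistent with the $\varepsilon_1$ leading term of the stated bound.
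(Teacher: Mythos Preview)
The paper does not provide its own proof of this theorem: it is quoted verbatim from \cite{niu2020quantitative} and used as a black box for the complexity analysis in Section~\ref{sec:complexity}. So there is no ``paper's proof'' to compare your proposal against.

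That said, your outline is essentially the strategy of \cite{niu2020quantitative}: build the reiterated first-order ansatz with correctors $\chi_k$ at each scale, smooth with Steklov-type operators $S_{\varepsilon_k}$ to handle the lack of $H^2$ regularity of intermediate quantities, introduce flux (dual) correctors to rewrite the residual in divergence form, control the boundary layer via a cutoff in an $O(\varepsilon_1)$-strip using the $C^{1,1}$ regularity of $\Omega$, and then run Aubin--Nitsche duality against the homogenized operator to pass from an $H^{-1}$-type bound to the $L^2$ rate. Your identification of the two genuinely delicate points---the systematic construction of flux correctors in the reiterated hierarchy so that every cross-scale term lands in $\mathrm{div}\,L^2$ with the correct factor $\varepsilon_{k+1}/\varepsilon_k$, and the boundary-layer control---matches where the actual work lies in that reference. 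As a high-level plan your proposal is sound; turning it into a full proof requires carrying out those two steps carefully, which is precisely the content of \cite{niu2020quantitative}.
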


To the best of the authors' knowledge, obtaining corrector results for multiscale homogenization is generally challenging.

\subsection{Classical numerical methods}
\label{sec:classical}

This section introduces the finite element method as a prototypical classical numerical method for both the canonical multiscale model and the homogenization models. To solve the derived algebraic equation, we will employ the conjugate gradient (CG) method. According to \cite{harrow2009quantum}, this method requires $O(\sqrt{\kappa}\log(1/\delta))$ matrix-vector multiplications, each taking $O(Ns)$ time, resulting in a total running time of $O(Ns\sqrt{\kappa}\log(1/\delta))$ for a positive definite matrix. Here, $s$ denotes the sparsity number, which represents the maximum number of nonzero entries per row, $\kappa$ is the condition number of the matrix, and $\delta$ is the expected error bound of the algorithm. We also provide the discretization error estimates and the properties of the finite element matrix, which will be utilized to calculate the complexities of the methods in Section \ref{sec:complexity}.
 
\subsubsection{Canonical multiscale model}\label{sec:numerical:canonical}
For simplicity, we choose $\Omega=[0,1]^d$. To solve \eqref{eqn:canonical:model} numerically using the finite element method, we seek an approximate solution $u_\varepsilon^h \in V_h$. Here, $V_h$ is the tensor product space of piecewise linear functions on a regular grid with grid size $h=1/(N+1)$, which can be viewed as a special case of the tensor product B-spline space \cite{hollig2003finite}. The basis of $V_h$ is given by $\phi_{\mathbf{i}}(x) = \prod_{l=1}^d \varphi_{i_l}(x^l)$, where $\mathbf{i}=(i_1,\cdots,i_d)$, and 
\begin{equation}
    V_h = \myspan\left\{ \phi_{\mathbf{i}}(x), x\in [0,1]^d \right\} = \underbrace{V_{h,0} \otimes \cdots \otimes V_{h,0}}_{d}, \quad V_{h,0} = \myspan \left\{ \varphi_{i}(x), x\in [0,1] \right\}.
\end{equation}
We then define the mass matrix $\mathbf{M}$, stiffness matrix $\mathbf{K}$, operator matrix $\mathbf{A}$, and force vector $\mathbf{F}$ as follows:
\begin{equation}\label{eqn:matrix:canonical:MK}
\begin{aligned}
    M_{\mathbf{i},\mathbf{i}^\prime} &= \int_\Omega \phi_{\mathbf{i}}(x)\phi_{\mathbf{i}^\prime}(x) dx, \quad \mathbf{M} = \left[M_{\mathbf{i},\mathbf{i}^\prime}\right]_{N^d\times N^d}, \\
    K_{\mathbf{i},\mathbf{i}^\prime} &= \int_\Omega \nabla\phi_{\mathbf{i}}(x)\cdot\nabla\phi_{\mathbf{i}^\prime}(x) dx, \quad \mathbf{K} = \left[K_{\mathbf{i},\mathbf{i}^\prime}\right]_{N^d\times N^d}, \\
    A_{\mathbf{i},\mathbf{i}^\prime} &= \int_\Omega A\left(x,\frac{x}{\varepsilon}\right) \nabla\phi_{\mathbf{i}}(x)\cdot\nabla\phi_{\mathbf{i}^\prime}(x) dx, \quad \mathbf{A} = \left[A_{\mathbf{i},\mathbf{i}^\prime}\right]_{N^d\times N^d}, \\
    F_{\mathbf{i}^\prime} &= \int_\Omega f(x)\phi_{\mathbf{i}^\prime}(x) dx, \quad \mathbf{F} = \left[F_{\mathbf{i}^\prime}\right]_{N^d\times 1}.
\end{aligned}
\end{equation}
Then the approximate solution can be obtained by solving
\begin{equation}\label{eqn:FEM:canonical:model}
    \mathbf{A} \mathbf{u_\varepsilon} = \mathbf{F}, \quad u_\varepsilon^h := \sum_{i_1,\cdots,i_d=1}^{N} u_{\varepsilon,\mathbf{i}} \phi_{\mathbf{i}}, \quad \mathbf{u_{\varepsilon}} = \left[u_{\varepsilon,\mathbf{i}}\right]_{N^d\times 1}.
\end{equation}

The following theorem presents the error estimate for the finite element method.
\begin{mythm}\label{thm:canonical:error}
Let $u_\varepsilon\in H^2(\Omega)$ be the solution of \eqref{eqn:canonical:model} and $u_\varepsilon^h \in V_h$ be the solution of \eqref{eqn:FEM:canonical:model}. Then we have
\begin{equation}
    \left\| u_\varepsilon - u_\varepsilon^h \right\|_{H^1(\Omega)} \leq Ch(\|u_\varepsilon\|_{H^2(\Omega)} + \|f\|_{H^0(\Omega)}),
\end{equation}
\begin{equation}
    \left\| u_\varepsilon - u_\varepsilon^h \right\|_{L^2(\Omega)} \leq Ch^2(\|u_\varepsilon\|_{H^2(\Omega)} + \|f\|_{H^0(\Omega)}).
\end{equation}
\end{mythm}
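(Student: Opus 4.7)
The plan is to run the standard Galerkin best-approximation argument for coercive bilinear forms, combined with tensor-product polynomial interpolation estimates for the $H^1$ bound and an Aubin--Nitsche duality step for the $L^2$ bound. First I would write \eqref{eqn:canonical:model} in weak form: find $u_\varepsilon\in H_0^1(\Omega)$ with $a_\varepsilon(u_\varepsilon,v)=(f,v)$ for all $v\in H_0^1(\Omega)$, where $a_\varepsilon(u,v):=\int_\Omega A_\varepsilon\nabla u\cdot\nabla v\,dx$. Assumption \eqref{eqn:assump:elliptic} yields coercivity $a_\varepsilon(v,v)\geq \alpha\|\nabla v\|_{L^2}^2$ and continuity $|a_\varepsilon(u,v)|\leq\beta\|\nabla u\|_{L^2}\|\nabla v\|_{L^2}$. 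Since $V_h\subset H_0^1(\Omega)$, subtracting the discrete equation gives Galerkin orthogonality $a_\varepsilon(u_\varepsilon-u_\varepsilon^h,v_h)=0$ for $v_h\in V_h$, and the usual Céa estimate produces
\begin{equation*}
    \|u_\varepsilon-u_\varepsilon^h\|_{H^1(\Omega)}\leq \frac{\beta}{\alpha}\inf_{v_h\in V_h}\|u_\varepsilon-v_h\|_{H^1(\Omega)}.
\end{equation*}

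Next I would pick $v_h=I_h u_\varepsilon$, the nodal interpolant onto the tensor-product piecewise linear space $V_h$. A Bramble--Hilbert argument on the reference cube followed by an affine scaling gives the standard bound $\|u_\varepsilon-I_h u_\varepsilon\|_{H^1(\Omega)}\leq C h\|u_\varepsilon\|_{H^2(\Omega)}$. Combining with Céa yields the first claim; the $\|f\|_{H^0(\Omega)}$ term is then absorbed via the a priori stability estimate $\|u_\varepsilon\|_{H^1(\Omega)}\leq \alpha^{-1}C_P\|f\|_{L^2(\Omega)}$ (from testing the weak form against $u_\varepsilon$ and using Poincaré), which also allows replacing $\|u_\varepsilon\|_{H^2}+\|f\|_{H^0}$ by the right-hand side as written.

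For the $L^2$ estimate I would use Aubin--Nitsche duality: let $w\in H_0^1(\Omega)$ solve the adjoint problem $a_\varepsilon(\varphi,w)=(u_\varepsilon-u_\varepsilon^h,\varphi)$ for all $\varphi\in H_0^1(\Omega)$. Elliptic regularity on the smooth domain yields $\|w\|_{H^2(\Omega)}\leq C\|u_\varepsilon-u_\varepsilon^h\|_{L^2(\Omega)}$. Choosing $\varphi=u_\varepsilon-u_\varepsilon^h$ and invoking Galerkin orthogonality against $I_h w$ gives
\begin{equation*}
    \|u_\varepsilon-u_\varepsilon^h\|_{L^2(\Omega)}^2=a_\varepsilon(u_\varepsilon-u_\varepsilon^h,w-I_h w)\leq \beta\|u_\varepsilon-u_\varepsilon^h\|_{H^1(\Omega)}\cdot C h\|w\|_{H^2(\Omega)},
\end{equation*}
and cancelling one factor of $\|u_\varepsilon-u_\varepsilon^h\|_{L^2}$ and inserting the $H^1$ bound produces the desired $Ch^2$ estimate.

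The real obstacle is not the abstract Galerkin machinery but justifying the $H^2$ regularity used at two places: once for $u_\varepsilon$ itself in the interpolation step, and once for the adjoint solution $w$ in the duality step. Both are delicate because $A_\varepsilon$ is rough with $\varepsilon$-dependent oscillations, so the implicit constants in the $H^2$ bounds depend adversely on $\varepsilon$. I would address this by assuming, as the theorem does, that $u_\varepsilon\in H^2(\Omega)$ is given, and by invoking standard elliptic regularity for the adjoint problem on $C^{1,1}$ domains; the unfavourable $\varepsilon$-scaling of the constants is acceptable here because this estimate is used only to quantify the discretization error at fixed $\varepsilon$, while the multiscale behaviour is tracked separately through the homogenization error in Theorem \ref{thm:reiterated:error}.
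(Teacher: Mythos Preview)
Your proposal is correct and is precisely the standard C\'ea/interpolation plus Aubin--Nitsche duality argument one would expect here. The paper itself does not supply a proof of this theorem at all: it is stated as a known finite element estimate and immediately followed by the remark that the bound degenerates like $1/\varepsilon$ because $|u_\varepsilon|_{H^2(\Omega)}=O(1/\varepsilon)$. So there is no alternative approach in the paper to compare against; your write-up simply fills in the classical argument that the authors take for granted, and your closing caveat about the $\varepsilon$-dependence of the $H^2$ regularity constants matches exactly the point the paper makes after stating the theorem.
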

Note that the estimate blows up as $1/\varepsilon$ as $\varepsilon$ approaches 0, as $|u_\varepsilon |_{H^2(\Omega)}=O(1/\varepsilon)$ \cite{hou2003numerical}. Therefore, to ensure the convergence of the finite element method, we require a prohibitively small mesh size $h<\varepsilon$. To estimate the numerical complexity, we need the following properties of matrix $\mathbf{A}$.
\begin{mythm}\label{thm:canonical:conditionnumber}
Given matrix $\mathbf{A}$ defined above, with $A(x,y)$ under the assumption \eqref{eqn:assump:elliptic}, and considering the piecewise linear element $\varphi_{i_k}$ along dimension $k$, the sparsity $s(\mathbf{A})$ and condition number $\kappa(\mathbf{A})$ satisfy
\begin{equation}
    s(\mathbf{A}) = 3^d, \quad \|A\|_{\max} = O(dh^{d-2}), \quad \kappa(\mathbf{A}) = O(3^ddh^{-2}).
\end{equation}
\end{mythm}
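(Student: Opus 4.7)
The plan is to verify each of the three estimates on $\mathbf{A}$ separately, using in every step only the tensor-product form of the basis $\phi_{\mathbf{i}}=\prod_{l=1}^{d}\varphi_{i_l}$ and the two-sided bound $\alpha\leq A\leq\beta$ from \eqref{eqn:assump:elliptic}. The sparsity can be read directly off the supports: the one-dimensional hat function $\varphi_{i}$ is supported on $[(i-1)h,(i+1)h]$, so $\phi_{\mathbf{i}}$ and $\phi_{\mathbf{i}'}$ have overlapping supports exactly when $|i_k-i_k'|\leq 1$ in every coordinate $k$, giving exactly $3^d$ nonzero entries per row and hence $s(\mathbf{A})=3^d$.

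To bound $\|\mathbf{A}\|_{\max}$, I would estimate a single entry on its common support. Since $|\varphi_j|\leq 1$ and $|\varphi_j'|\leq 1/h$, each partial derivative obeys $|\partial_k\phi_{\mathbf{i}}|\leq 1/h$ pointwise, so $|\nabla\phi_{\mathbf{i}}\cdot\nabla\phi_{\mathbf{i}'}|\leq d/h^2$; meanwhile the common support has Lebesgue measure $O(h^d)$. Combined with $|A|\leq\beta$, this gives $|A_{\mathbf{i},\mathbf{i}'}|\leq \beta\cdot O(h^d)\cdot d/h^2 = O(dh^{d-2})$.

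For the condition number, the upper bound on $\lambda_{\max}(\mathbf{A})$ is immediate from Gershgorin applied to the symmetric $\mathbf{A}$: $\lambda_{\max}(\mathbf{A})\leq s(\mathbf{A})\|\mathbf{A}\|_{\max}=O(3^d d h^{d-2})$. To lower-bound $\lambda_{\min}(\mathbf{A})$ I would combine coercivity with Poincaré's inequality on $H_0^1(\Omega)$: for any coefficient vector $\mathbf{u}$ with finite-element image $u^h=\sum_{\mathbf{i}}u_{\mathbf{i}}\phi_{\mathbf{i}}$,
\begin{equation*}
\mathbf{u}^T\mathbf{A}\mathbf{u}=\int_{\Omega} A\,\nabla u^h\cdot\nabla u^h\,dx\geq \alpha\|\nabla u^h\|_{L^2}^2\geq \frac{\alpha}{C_P^2}\|u^h\|_{L^2}^2=\frac{\alpha}{C_P^2}\,\mathbf{u}^T\mathbf{M}\mathbf{u}.
\end{equation*}
Writing $\mathbf{M}$ as the $d$-fold tensor product of the one-dimensional mass matrix, namely the tridiagonal matrix with stencil $h\cdot(1/6,2/3,1/6)$ whose eigenvalues are of order $h$, one sees that $\lambda_{\min}(\mathbf{M})=\Theta(h^d)$, hence $\lambda_{\min}(\mathbf{A})=\Omega(h^d)$. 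Dividing the two spectral bounds yields $\kappa(\mathbf{A})=O(3^d d h^{-2})$.

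The only delicate step is this last one: translating the continuous coercivity estimate on $\nabla u^h$ into an $\ell_2$ lower bound on the coefficient vector $\mathbf{u}$, which is precisely the role played by the spectral bound on the tensor-product mass matrix. Everything else is routine bookkeeping of the dimension factors that surface from the $d$ partial derivatives and the $3^d$ intersection pattern of tensor-product hats; in particular positive definiteness of $\mathbf{A}$, needed for the Schrödingerisation framework of Section~\ref{sec:quantum}, is a free by-product of the $\lambda_{\min}(\mathbf{A})>0$ estimate.
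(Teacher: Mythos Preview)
Your argument for $s(\mathbf{A})=3^d$ and $\|\mathbf{A}\|_{\max}=O(dh^{d-2})$ is fine and matches the paper. The lower bound on $\lambda_{\min}(\mathbf{A})$ via coercivity, Poincar\'e, and the mass matrix is also essentially what the paper does (the paper routes it through the stiffness matrix $\mathbf{K}$ first, writing $\alpha\mathbf{K}\leq\mathbf{A}\leq\beta\mathbf{K}$ and then $\mathbf{K}\geq\pi^2\mathbf{M}$, but the content is the same).

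The gap is in your upper bound on $\lambda_{\max}(\mathbf{A})$. Gershgorin on $\mathbf{A}$ gives $\lambda_{\max}(\mathbf{A})\leq s(\mathbf{A})\|\mathbf{A}\|_{\max}=O(3^d d h^{d-2})$, which carries an extra factor $3^d$. At the same time, your claim $\lambda_{\min}(\mathbf{M})=\Theta(h^d)$ is too optimistic: the one-dimensional mass matrix has $\lambda_{\min}(\mathbf{M_1})\geq h/3$, so the $d$-fold tensor product satisfies only $\lambda_{\min}(\mathbf{M})\geq (h/3)^d$, i.e.\ a hidden $3^{-d}$. Putting your two bounds together honestly gives $\kappa(\mathbf{A})=O(3^{2d}dh^{-2})$, not the stated $O(3^d d h^{-2})$.

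The paper avoids the spurious $3^d$ in the numerator by \emph{not} using Gershgorin on $\mathbf{A}$. Instead it bounds $\lambda_{\max}(\mathbf{A})\leq\beta\lambda_{\max}(\mathbf{K})$ and then exploits the explicit tensor decomposition
\[
\mathbf{K}=\sum_{l=1}^{d}\mathbf{M_1}^{\otimes(l-1)}\otimes\mathbf{K_1}\otimes\mathbf{M_1}^{\otimes(d-l)},
\]
which yields $\lambda_{\max}(\mathbf{K})\leq d\,\lambda_{\max}(\mathbf{K_1})\,\lambda_{\max}(\mathbf{M_1})^{d-1}\leq 4dh^{d-2}$ with no exponential-in-$d$ prefactor. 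Combined with $\lambda_{\min}(\mathbf{K})\geq\pi^2(h/3)^d$ this gives the single $3^d$. If you want to keep your direct route, you need a sharper replacement for the Gershgorin step---for instance, first pass to $\mathbf{K}$ via ellipticity and then use its tensor structure, exactly as the paper does.
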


\begin{proof}
The sparsity and maximum value of $\mathbf{A}$ is obvious. To estimate the condition number $\kappa(\mathbf{A})$, we start from the ellipticity assumption \eqref{eqn:assump:elliptic}, then for any  $\mathbf{u_{\varepsilon}}\in\mathbb{R}^{N^d}$,
\begin{equation}
    \mathbf{u_{\varepsilon}}^T\mathbf{A}\mathbf{u_{\varepsilon}} = \int_{\Omega} A\left(x,\frac{x}{\varepsilon}\right)\nabla u_{\varepsilon}^h(x)\cdot\nabla u_{\varepsilon}^h(x) dx \leq \beta \int_{\Omega} \nabla u_{\varepsilon}^h(x)\cdot\nabla u_{\varepsilon}^h(x) dx = \beta \mathbf{u_{\varepsilon}}^T\mathbf{K}\mathbf{u_{\varepsilon}},
\end{equation}
and similarly $\mathbf{u_{\varepsilon}}^T\mathbf{A}\mathbf{u_{\varepsilon}} \geq \alpha \mathbf{u_{\varepsilon}}^T\mathbf{K}\mathbf{u_{\varepsilon}}$. It follows by the fact that $\kappa(\mathbf{A}) \leq \frac{\beta}{\alpha} \kappa(\mathbf{K})$.

The condition numbers of the mass matrix $\mathbf{M}$ and stiffness matrix $\mathbf{K}$ are well known, see \cite{hollig2003finite,fried1973l2}. For completeness, we include the following derivation for piecewise linear element, i.e., for any $l=1,\cdots,d$, $i_l=1,\cdots,N$, 
\begin{equation}
    \varphi_{i_l}(x^l) = \frac{x^l-(i_l-1)h}{h} \mathbb{I}_{i_l-1} + \frac{(i_l+1)h-x^l}{h} \mathbb{I}_{i_l}, \quad \mathbb{I}_{i_l} = [i_lh, (i_l+1)h].
\end{equation}
To begin with, let $d=1$, then
\begin{equation}
\begin{aligned}
    M_{i,i} &= 2\int_{0}^{h} \frac{x^2}{h^2} dx = \frac{2}{3}h, \quad M_{i,i+1} = M_{i,i-1} = \int_{0}^{h} \left(1-\frac{x}{h}\right) \frac{x}{h} dx = \frac{h}{6}, \quad M_{i,j} = 0, \text{ otherwise}, \\
    K_{i,i} &= 2\int_{0}^{h} \frac{1}{h^2} dx = \frac{2}{h}, \quad K_{i,i+1} = K_{i,i-1} = \int_{0}^{h} -\frac{1}{h^2} dx = -\frac{1}{h}, \quad K_{i,j} = 0, \text{ otherwise}. \\
\end{aligned}
\end{equation}
By the Gershgorin circle theorem \cite{gershgorin1931uber}, we obtain 
\begin{equation}
    \lambda_{\max}(\mathbf{M_1}) \leq h, \quad \lambda_{\max}(\mathbf{K_1}) \leq \frac{4}{h}, \quad \lambda_{\min}(\mathbf{K_1})\geq \pi^2\lambda_{\min}(\mathbf{M_1}) \geq \pi^2\frac{h}{3},
\end{equation}
where the subscript index $1$ denotes $d=1$ and we used the Poincaré inequality with constant $\pi^2$ \cite{bebendorf2003note},
\begin{equation}\label{eqn:M1K1:conditionnumber}
    \int_{\Omega} |\nabla v^h(x)|^2 dx \geq \pi^2 \int_{\Omega} |v^h(x)|^2 dx \quad \Rightarrow \quad  \mathbf{u_{\varepsilon}}^T\mathbf{K}\mathbf{u_{\varepsilon}} \geq \pi^2 \mathbf{u_{\varepsilon}}^T\mathbf{M}\mathbf{u_{\varepsilon}}.
\end{equation}

For the case $d\geq 2$, it is easy to calculate
\begin{equation}\label{eqn:matrix:MK}
    \mathbf{M} = \bigotimes_{l=1}^{d} \mathbf{M_1}, \quad \mathbf{K} = \mathbf{K_1}\otimes\mathbf{M_1}\otimes\cdots\otimes\mathbf{M_1} + \cdots + \mathbf{M_1}\otimes\cdots\otimes\mathbf{M_1}\otimes\mathbf{K_1},
\end{equation}
which leads to the following results
\begin{equation}\label{eqn:MK:conditionnumber}
    \lambda_{\max}(\mathbf{K}) \leq 4dh^{d-2}, \quad\lambda_{\min}(\mathbf{K}) \geq \pi^2 \left(\frac{h}{3}\right)^d, \quad  \kappa(\mathbf{A}) \leq \frac{4\beta}{\alpha \pi^2} 3^d d h^{-2}.
\end{equation}
\end{proof}

\subsubsection{Two-scale homogenization model}
For the two-scale homogenization model, $u_0(x)$ and $u_1(x,y)$ are approximated in the space $V_h$ and $V_h\otimes V_h$ respectively. Similar as in Section \ref{sec:numerical:canonical}, we define the stiffness matrix $\mathbf{\tilde{K}}$, operator matrix $\mathbf{\tilde{A}}$ and force vector $\mathbf{\tilde{F}}$ as follows,
\begin{equation}\label{eqn:matrix:two-scale:MK}
\begin{aligned}
    \tilde{K}_{\mathbf{j}^\prime \mathbf{k}^\prime, \mathbf{j}\mathbf{k}} &= \int_\Omega\int_Y (\phi_{\mathbf{j}}\phi_{\mathbf{j}^\prime})(x) (\nabla_y\phi_{\mathbf{k}} \cdot \nabla_y\phi_{\mathbf{k}^\prime})(y) dy dx, \quad 
    \tilde{K}_{\mathbf{i}^\prime, \mathbf{i}} = \int_\Omega\int_Y (\nabla_x\phi_{\mathbf{i}} \cdot \nabla_x\phi_{\mathbf{i}^\prime})(x) dy dx, \\
    \tilde{K}_{\mathbf{j}^\prime \mathbf{k}^\prime, \mathbf{i}} &= \int_\Omega\int_Y (\phi_{\mathbf{j}^\prime}\nabla_x\phi_{\mathbf{i}})(x) \cdot \nabla_y\phi_{\mathbf{k}^\prime}(y) dy dx = 0, \quad 
    \tilde{K}_{\mathbf{i}^\prime, \mathbf{j}\mathbf{k}} = \int_\Omega\int_Y (\phi_{\mathbf{i}}\nabla_x\phi_{\mathbf{i}^\prime})(x) \cdot \nabla_y\phi_{\mathbf{j}}(y) dy dx = 0, \\
    \tilde{A}_{\mathbf{j}^\prime \mathbf{k}^\prime, \mathbf{j}\mathbf{k}} &= \int_\Omega\int_Y A(x,y)(\phi_{\mathbf{j}}\phi_{\mathbf{j}^\prime})(x) (\nabla_y\phi_{\mathbf{k}} \cdot \nabla_y\phi_{\mathbf{k}^\prime})(y) dy dx, \quad 
    \tilde{A}_{\mathbf{i}^\prime, \mathbf{i}} = \int_\Omega\int_Y A(x,y)(\nabla_x\phi_{\mathbf{i}} \cdot \nabla_x\phi_{\mathbf{i}^\prime})(x) dy dx, \\
    \tilde{A}_{\mathbf{j}^\prime \mathbf{k}^\prime, \mathbf{i}} &= \int_\Omega\int_Y A(x,y)(\phi_{\mathbf{j}^\prime}\nabla_x\phi_{\mathbf{i}})(x) \cdot \nabla_y\phi_{\mathbf{k}^\prime}(y) dy dx, \quad 
    \tilde{A}_{\mathbf{i}^\prime, \mathbf{j}\mathbf{k}} = \int_\Omega\int_Y A(x,y)(\phi_{\mathbf{j}}\nabla_x\phi_{\mathbf{i}^\prime})(x) 
    \cdot \nabla_y\phi_{\mathbf{k}}(y) dy dx, \\
    \tilde{F}_{\mathbf{i}^\prime} &= \int_\Omega f(x)\phi_{\mathbf{i}^\prime}(x) dx, \quad \mathbf{i}=(i_1,\cdots,i_d), \quad \mathbf{j}=(j_1,\cdots,j_d), \quad \mathbf{k}=(k_1,\cdots,k_d),\\
\end{aligned}
\end{equation}
with
\begin{equation}
    u_0^h = \sum_{1\leq \mathbf{i} \leq N} u_{0,\mathbf{i}} \phi_{\mathbf{i}}(x), \quad u_1^h = \sum_{0\leq \mathbf{j} \leq N+1 \atop 1 \leq \mathbf{k} \leq N} u_{1,\mathbf{j}\mathbf{k}} \phi_{\mathbf{j}}(x)\phi_{\mathbf{k}}(y).
\end{equation}

\begin{myremark}
We remark here on the range of indices $\mathbf{i}, \mathbf{j}$ and $\mathbf{k}$. The choice $1\leq \mathbf{i} \leq N$ is clear since $u_0\equiv 0$ on $\partial \Omega$, which can be obtained from the convergence $u_\varepsilon \rightharpoonup u_0$ in $H^1(\Omega)$. However, $u_1|_{\partial \Omega}$ needs to be calculated in terms of $\nabla_x u_0(x)$, therefore we let $0\leq \mathbf{j} \leq N+1$. Besides, due to the periodicity, there is a unique $u_1$ up to a constant w.r.t. $y$ and we choose the constant to force $u_1|_{\partial Y}\equiv 0$, which leads to $1\leq \mathbf{k}\leq N$.
\end{myremark}

With the definition \eqref{eqn:matrix:two-scale:MK}, one gets the following $((N+2)^dN^{d}+N^d)\times((N+2)^dN^{d}+N^d)$ matrices and $((N+2)^dN^{d}+N^d)\times 1$ vectors,
\begin{equation}\label{eqn:matrix:two-scale:uv}
    \mathbf{\tilde{K}} = \begin{bmatrix}
    \left[\tilde{K}_{\mathbf{j}^\prime \mathbf{k}^\prime, \mathbf{j}\mathbf{k}}\right] & \left[\tilde{K}_{\mathbf{j}^\prime \mathbf{k}^\prime, \mathbf{i}}\right] \\ 
    \left[\tilde{K}_{\mathbf{i}^\prime, \mathbf{j}\mathbf{k}}\right] & \left[\tilde{K}_{\mathbf{i}^\prime, \mathbf{i}^\prime}\right] \\ 
    \end{bmatrix}, \quad 
    \mathbf{\tilde{A}} = \begin{bmatrix}
    \left[\tilde{A}_{\mathbf{j}^\prime \mathbf{k}^\prime, \mathbf{j}\mathbf{k}}\right] & \left[\tilde{A}_{\mathbf{j}^\prime \mathbf{k}^\prime, \mathbf{i}}\right] \\ 
    \left[\tilde{A}_{\mathbf{i}^\prime, \mathbf{j}\mathbf{k}}\right] & \left[\tilde{A}_{\mathbf{i}^\prime, \mathbf{i}^\prime}\right] \\ 
    \end{bmatrix}, \quad 
    \mathbf{\tilde{F}} = \begin{bmatrix}
    \mathbf{0} \\ 
    \left[\tilde{F}_{\mathbf{i}^\prime}\right]\\ 
    \end{bmatrix}, \quad 
    \mathbf{\tilde{u}} = \begin{bmatrix}
    \left[u_{1,\mathbf{j}\mathbf{k}}\right]\\ 
    \left[u_{0,\mathbf{i}}\right]\\ 
    \end{bmatrix}.
\end{equation}
The approximate solution can be obtained by solving
\begin{equation}\label{eqn:FEM:two-scale:model}
    \mathbf{\tilde{A}} \mathbf{\tilde{u}} = \mathbf{\tilde{F}}.
\end{equation}

From the ellipticity of $A(x,y)$, we have $\|u_0\|_{H^2(\Omega\times Y)}+\|u_1\|_{H^2(\Omega\times Y)} \simeq \| f\|_{H^0(\Omega)}$. Applying Theorem \ref{thm:canonical:error}, it follows by
\begin{equation}
    \|u_0-u_0^h\|_{L^2(\Omega\times Y)} + \|u_1 - u_1^h\|_{L^2(\Omega\times Y)} \leq Ch^2.
\end{equation}
The properties of matrix $\mathbf{\tilde{A}}$ is given below.
\begin{mythm}\label{thm:two-scale:conditionnumber}
Given matrix $\mathbf{\tilde{A}}$ defined as above, with $A(x,y)$ under the assumption \eqref{eqn:assump:elliptic}, and considering the tensor products of piecewise linear finite elements
\begin{equation}
    \phi_{\mathbf{i}} = \prod_{l=1}^d \varphi_{i_l}(x^l), \quad \phi_{\mathbf{j}} = \prod_{l=1}^d \frac{1}{\sqrt{h}}\varphi_{j_l}(x^l), \quad 
    \phi_{\mathbf{k}} = \prod_{l=1}^d \varphi_{k_l}(y^l),
\end{equation}
then the sparsity $s(\mathbf{\tilde{A}})$ and condition number $\kappa(\mathbf{\tilde{A}})$ satisfy
\begin{equation}
    s(\mathbf{\tilde{A}}) = 3^{2d} + 3^{d}, \quad \|A\|_{\max} = O(dh^{d-2}), \quad \kappa(\mathbf{\tilde{A}}) = O(3^{2d}dh^{-2}).
\end{equation}
\end{mythm}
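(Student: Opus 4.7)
The plan is to follow the template of Theorem~\ref{thm:canonical:conditionnumber} and establish the three assertions in sequence, exploiting the tensor-product structure of the two-scale finite element space.

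For the sparsity count I would split $\mathbf{\tilde{A}}$ according to the block form~\eqref{eqn:matrix:two-scale:uv}. A row indexed by a pair $(\mathbf{j}',\mathbf{k}')$ receives nonzero contributions $\tilde{A}_{\mathbf{j}'\mathbf{k}',\mathbf{j}\mathbf{k}}$ only when $\mathbf{j}$ lies among the $3^{d}$ lattice neighbors of $\mathbf{j}'$ and, independently, $\mathbf{k}$ lies among the $3^{d}$ neighbors of $\mathbf{k}'$, giving $3^{2d}$ entries; and $\tilde{A}_{\mathbf{j}'\mathbf{k}',\mathbf{i}}$ only when $\mathbf{i}$ is a neighbor of $\mathbf{j}'$, adding $3^{d}$ more. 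Rows indexed by $\mathbf{i}'$ are handled symmetrically, so every row has at most $3^{2d}+3^{d}$ nonzeros. For the entrywise maximum I would combine $A(x,y)\le\beta$ with the normalization $\phi_{\mathbf{j}}=h^{-d/2}\prod_l\varphi_{j_l}$, which is engineered exactly so that $\int_\Omega \phi_{\mathbf{j}}\phi_{\mathbf{j}'}\,dx=O(1)$; the $y$-gradient integral then contributes the familiar $O(dh^{d-2})$ from the 1D stiffness estimates in the canonical proof. The bottom-right block falls under the canonical estimate directly, while the cross blocks carry an extra factor $h^{-d/2}\cdot h^{d}$ together with one gradient in $x$ and one in $y$, so a direct estimate shows they are of order $O(dh^{3d/2-2})$, strictly smaller than $O(dh^{d-2})$ for any $d\ge 1$. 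Hence $\|\mathbf{\tilde{A}}\|_{\max}=O(dh^{d-2})$.

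For the condition number I would start from the quadratic form identity
\begin{equation*}
\mathbf{\tilde{u}}^{T}\mathbf{\tilde{A}}\mathbf{\tilde{u}}=\int_{\Omega}\int_{Y} A(x,y)\,\bigl|\nabla_{x}u_{0}^{h}(x)+\nabla_{y}u_{1}^{h}(x,y)\bigr|^{2}\,dy\,dx,
\end{equation*}
and observe that the cross term $\int_{\Omega}\int_{Y}\nabla_{x}u_{0}^{h}\cdot\nabla_{y}u_{1}^{h}\,dy\,dx$ vanishes by the $Y$-periodicity of $u_{1}^{h}$ (this is precisely the mechanism by which the off-diagonal blocks of $\mathbf{\tilde{K}}$ in~\eqref{eqn:matrix:two-scale:MK} are zero). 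The ellipticity assumption~\eqref{eqn:assump:elliptic} then yields $\alpha\,\mathbf{\tilde{u}}^{T}\mathbf{\tilde{K}}\mathbf{\tilde{u}}\le \mathbf{\tilde{u}}^{T}\mathbf{\tilde{A}}\mathbf{\tilde{u}}\le\beta\,\mathbf{\tilde{u}}^{T}\mathbf{\tilde{K}}\mathbf{\tilde{u}}$, so $\kappa(\mathbf{\tilde{A}})\le(\beta/\alpha)\,\kappa(\mathbf{\tilde{K}})$. Because $\mathbf{\tilde{K}}$ is block-diagonal, it suffices to bound each block's spectrum. The first block factors as a Kronecker product of the normalized $x$-mass matrix $\mathbf{M}_{x}'=h^{-d}\bigotimes_{l}\mathbf{M_{1}}$, whose eigenvalues lie in an interval $[c\,3^{-d},1]$ by Gershgorin, with the $y$-stiffness matrix, whose eigenvalues lie in $[\pi^{2}(h/3)^{d},4dh^{d-2}]$ via~\eqref{eqn:MK:conditionnumber}; the second block is $\mathbf{K}$ itself with the same bounds as in the canonical case. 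Taking minima and maxima across the two blocks gives $\lambda_{\min}(\mathbf{\tilde{K}})\ge (\pi^{2}/3^{2d})\,h^{d}$ and $\lambda_{\max}(\mathbf{\tilde{K}})\le 4dh^{d-2}$, yielding $\kappa(\mathbf{\tilde{K}})=O(3^{2d}dh^{-2})$ and therefore the claimed bound on $\kappa(\mathbf{\tilde{A}})$.

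The main technical obstacle will be the careful bookkeeping of the Kronecker structure under the chosen normalization and with the enlarged index range $0\le\mathbf{j}\le N+1$ (the boundary contributions of the $x$-mass matrix must be absorbed into the Gershgorin estimate without worsening the exponent in $d$). The extra factor $3^{2d}$, doubled from the canonical $3^{d}$, is seen to come specifically from the product $\lambda_{\min}(\mathbf{M}_{x}')\cdot\lambda_{\min}(\mathbf{K}_{y}^{(d)})$ in the first diagonal block of $\mathbf{\tilde{K}}$ and is the price paid for the tensor-product lift of the two-scale finite element space.
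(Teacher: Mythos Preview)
Your proposal is correct and follows essentially the same approach as the paper: reduce $\kappa(\mathbf{\tilde{A}})$ to $\kappa(\mathbf{\tilde{K}})$ via the ellipticity bounds, identify $\mathbf{\tilde{K}}$ as the block-diagonal matrix $\mathrm{diag}\bigl(h^{-d}\mathbf{M}\otimes\mathbf{K},\,\mathbf{K}\bigr)$, and read off the eigenvalue bounds from the canonical estimates~\eqref{eqn:M1K1:conditionnumber}--\eqref{eqn:MK:conditionnumber} to obtain $\lambda_{\max}(\mathbf{\tilde{K}})\le\lambda_{\max}(\mathbf{K})$ and $\lambda_{\min}(\mathbf{\tilde{K}})\ge 3^{-d}\lambda_{\min}(\mathbf{K})$. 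You supply more detail than the paper (which declares the sparsity and $\|\cdot\|_{\max}$ bounds ``obvious'' and simply writes down the block form of $\mathbf{\tilde{K}}$), but the structure of the argument is identical.
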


\begin{proof}
The proof is similar as the proof of Theorem \ref{thm:canonical:conditionnumber}. The sparsity and maximum value of $\mathbf{\tilde{A}}$ is obvious. According to the ellipticity assumption \eqref{eqn:assump:elliptic}, for any  $\mathbf{\tilde{u}}\in\mathbb{R}^{((N+2)^dN^{d}+N^d)}$,
\begin{equation}
    \alpha \mathbf{\tilde{u}}^T\mathbf{\tilde{K}}\mathbf{\tilde{u}} \leq \mathbf{\tilde{u}}^T\mathbf{\tilde{A}}\mathbf{\tilde{u}} \leq \beta \mathbf{\tilde{u}}^T\mathbf{\tilde{K}}\mathbf{\tilde{u}} \quad \Rightarrow \quad \kappa(\mathbf{\tilde{A}}) \leq \frac{\beta}{\alpha} \kappa(\mathbf{\tilde{K}}).
\end{equation}
From the definition \eqref{eqn:matrix:two-scale:MK}, it is clear that $\mathbf{\tilde{K}}$ can be written as
\begin{equation}
    \mathbf{\tilde{K}} = \begin{bmatrix}
    \frac{1}{h^d}\mathbf{M}\otimes\mathbf{K} & \mathbf{0} \\
    \mathbf{0} & \mathbf{K}
    \end{bmatrix}.
\end{equation}
Recalling the condition number of $\mathbf{M}$, $\mathbf{K}$ in \eqref{eqn:M1K1:conditionnumber} and \eqref{eqn:MK:conditionnumber}, we obtain
\begin{equation}
    \lambda_{\max}(\mathbf{\tilde{K}}) \leq \lambda_{\max}(\mathbf{K}), \quad  \lambda_{\min}(\mathbf{\tilde{K}}) \geq \frac{\lambda_{\min}(\mathbf{K})}{3^d}, \quad \kappa(\mathbf{\tilde{A}}) \leq 3^d\kappa(\mathbf{A}) = O(3^{2d}dh^{-2}).
\end{equation}
\end{proof}

\subsubsection{Reiterated homogenization}
The definitions of $\mathbf{\tilde{K}}, \mathbf{\tilde{A}}, \mathbf{\tilde{F}}$ can be directly extended to the case $n\geq 2$ by approximating $u_k(x,y_1,\dots,y_k)$ with $u_k^h(x,y_1,\dots,y_k) \in V^{k+1}_h := \underbrace{V_h\otimes \dots \otimes V_h}_{k+1}$. After a similar derivation, we obtain an error estimate 
\begin{equation}
    \|u_0-u_0^h\|_{L^2(\Omega\times Y_1 \times \dots \times Y_n)} + \sum_{k=1}^n\|u_k - u_k^h\|_{L^2(\Omega\times Y_1\times \dots \times Y_n)} \leq Ch^2,
\end{equation}
and the properties of matrix $\mathbf{\tilde{A}}$.
\begin{mythm}\label{thm:reiterated:conditionnumber}
Let $\mathbf{\tilde{A}}$ be the matrix obtained from applying the finite element method to \eqref{eqn:reiterated:homogenizedmodel}, with $A(x,y)$ satisfying the assumption \eqref{eqn:assump:elliptic}. We consider the tensor product of piecewise linear finite elements for $u_k$, where $k$ ranges from 0 to $n$,
\begin{equation}
    \phi_{\mathbf{i}}^k = \prod_{l=1}^d \varphi_{i_l}^k(y_k^l), \quad \phi_{\mathbf{i}}^{r} = \prod_{l=1}^d \frac{1}{\sqrt{h}}\varphi_{i_l}^r(y_r^l), \quad r = 0,\cdots,k-1,
\end{equation}
where we denote $y_0=x$. The stiffness matrix $\mathbf{\tilde{K}}$ can be written as 
\begin{equation}
    \mathbf{\tilde{K}} = \begin{bmatrix}
        \frac{1}{h^d}\mathbf{M}\otimes\cdots\otimes\frac{1}{h^d}\mathbf{M}\otimes\mathbf{K} & \mathbf{0} & \mathbf{0} & \mathbf{0}\\
        \mathbf{0} & \cdots & \mathbf{0} & \mathbf{0}\\
        \mathbf{0} & \mathbf{0} & \frac{1}{h^d}\mathbf{M}\otimes\mathbf{K} & \mathbf{0} \\
        \mathbf{0} & \mathbf{0} & \mathbf{0} & \mathbf{K}
    \end{bmatrix}.
\end{equation}
The sparsity $s(\mathbf{\tilde{A}})$ and condition number $\kappa(\mathbf{\tilde{A}})$ satisfy
\begin{equation}
    s(\mathbf{\tilde{A}}) = \sum_{k=1}^{n+1}3^{kd}, \quad \|A\|_{\max} = O(dh^{d-2}), \quad \kappa(\mathbf{\tilde{A}}) = O(3^{(n+1)d}dh^{-2}).
\end{equation}
\end{mythm}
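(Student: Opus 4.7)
The plan is to mirror the two-scale proof of Theorem \ref{thm:two-scale:conditionnumber}, using the block-diagonal structure of $\mathbf{\tilde{K}}$ to reduce everything to the single-scale estimates from Theorem \ref{thm:canonical:conditionnumber} together with tensor-product eigenvalue calculus.

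First I would verify the claimed block-diagonal form of $\mathbf{\tilde{K}}$. As in the two-scale case where the cross entries $\tilde{K}_{\mathbf{j}'\mathbf{k}',\mathbf{i}}$ and $\tilde{K}_{\mathbf{i}',\mathbf{j}\mathbf{k}}$ in \eqref{eqn:matrix:two-scale:MK} vanish, any coupling between the blocks corresponding to $u_k$ and $u_l$ (with $k<l$) involves an inner integral of the form $\int_{Y_l} \nabla_{y_l}\phi_{\mathbf{k}}^{l}(y_l)\,dy_l$, which is zero by $Y_l$-periodicity of the basis functions. This leaves exactly the $(n+1)$ diagonal blocks shown in the statement, where the block for $u_k$ is the $(k+1)$-fold Kronecker product $(h^{-d}\mathbf{M})^{\otimes k}\otimes \mathbf{K}$ (with the $\mathbf{K}$ factor acting in the $y_k$ variable and the $h^{-d}$ rescalings arising from the normalization $\phi_{\mathbf{i}}^r = h^{-d/2}\prod_l \varphi_{i_l}^r$).

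Next I would read off the sparsity and the max norm. Each diagonal block, being a tensor product of $k+1$ matrices with row-sparsity $3^d$ each, has row-sparsity $3^{(k+1)d}$; summing over $k=0,\dots,n$ yields $s(\mathbf{\tilde{A}}) = \sum_{k=1}^{n+1} 3^{kd}$. The max-norm bound $\|A\|_{\max} = O(dh^{d-2})$ is identical to the single-scale calculation in Theorem \ref{thm:canonical:conditionnumber} and Theorem \ref{thm:two-scale:conditionnumber}.

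For the condition number, ellipticity gives $\alpha\,\mathbf{\tilde{u}}^T\mathbf{\tilde{K}}\mathbf{\tilde{u}} \leq \mathbf{\tilde{u}}^T\mathbf{\tilde{A}}\mathbf{\tilde{u}} \leq \beta\,\mathbf{\tilde{u}}^T\mathbf{\tilde{K}}\mathbf{\tilde{u}}$ exactly as before, so $\kappa(\mathbf{\tilde{A}}) \leq (\beta/\alpha)\,\kappa(\mathbf{\tilde{K}})$. By block-diagonality, $\kappa(\mathbf{\tilde{K}})$ is the ratio of the largest eigenvalue over all blocks to the smallest one. Using the one-dimensional bounds recalled in \eqref{eqn:M1K1:conditionnumber} together with tensor-product eigenvalue identities, I would bound the block corresponding to $u_k$ by
\begin{equation*}
\lambda_{\max}\bigl((h^{-d}\mathbf{M})^{\otimes k}\otimes\mathbf{K}\bigr) \leq \bigl(\lambda_{\max}(\mathbf{M})/h^d\bigr)^{k}\lambda_{\max}(\mathbf{K}) \leq 4dh^{d-2},
\end{equation*}
\begin{equation*}
\lambda_{\min}\bigl((h^{-d}\mathbf{M})^{\otimes k}\otimes\mathbf{K}\bigr) \geq \bigl(\lambda_{\min}(\mathbf{M})/h^d\bigr)^{k}\lambda_{\min}(\mathbf{K}) \geq 3^{-kd}\pi^2(h/3)^d,
\end{equation*}
so each block contributes a condition number of order $3^{(k+1)d}dh^{-2}$. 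The worst case is $k=n$, giving $\kappa(\mathbf{\tilde{K}}) = O(3^{(n+1)d}dh^{-2})$ and hence the stated bound on $\kappa(\mathbf{\tilde{A}})$.

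The only step that is not entirely routine is the vanishing of the off-diagonal blocks for arbitrary $n$, since one has to keep track of which periodicity/integration-by-parts argument kills which coupling term; everything else is a clean bookkeeping extension of the two-scale arguments, and I do not anticipate any genuinely new obstacle.
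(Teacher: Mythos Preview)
Your proposal is correct and follows exactly the approach the paper intends: the paper's own proof consists of the single sentence ``The proof is similar as the proof of Theorem \ref{thm:two-scale:conditionnumber},'' and your write-up is precisely the natural $(n+1)$-scale extension of that argument, with the same ellipticity sandwich $\alpha\mathbf{\tilde u}^T\mathbf{\tilde K}\mathbf{\tilde u}\le \mathbf{\tilde u}^T\mathbf{\tilde A}\mathbf{\tilde u}\le \beta\mathbf{\tilde u}^T\mathbf{\tilde K}\mathbf{\tilde u}$, the same block-diagonal structure of $\mathbf{\tilde K}$, and the same tensor-product eigenvalue bounds applied blockwise.
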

\begin{proof}
The proof is similar as the proof of Theorem \ref{thm:two-scale:conditionnumber}.
\end{proof}
\section{Multiscale Parabolic PDEs}
\label{sec:parabolic}



\subsection{Canonical model and homogenization model}

We consider multiscale parabolic equations of the following type
\begin{equation}\label{eqn:parabolic:model}
  \left\{ \begin{aligned}
  & \frac{\partial u_\varepsilon}{\partial t} - \mydiv A_\varepsilon \nabla u_\varepsilon = f \quad & (x,t) \in \Omega_T := \Omega \times [0,T], \\
  & u_\varepsilon(x,t) = 0 \quad & (x,t) \in \partial \Omega \times [0,T], \\
  & u_\varepsilon(x,0) = u^0(x), \quad & x \in \Omega,
  \end{aligned}\right.
\end{equation}
where the coefficient $A_\varepsilon = A(x/\varepsilon)$ is dependent of $t$ and satisfies the same assumptions as in Section \ref{sec:elliptic}. Then 
\begin{equation}\label{eqn:two-scale:parabolic:homogenizedmodel}
    \left\{ \begin{aligned}
    &-\mydiv_{y} A(y)\left( \nabla_x u_0(x,t) + \nabla_y u_1(x,y,t) \right) = 0, \\
    &\frac{\partial u_0(x,t)}{\partial t}  -\mydiv_{x} \left[ \int_{Y}  A(y)\left( \nabla_x u_0(x,t) + \nabla_y u_1(x,y,t) \right) dy \right] = f(x,t), \\
    &u_1 \text{ is $Y$-periodic with respect to $y$},
    \end{aligned}\right.
\end{equation}
defines $u_0$, the limit of $u_\varepsilon$ \cite{bensoussan2011asymptotic} and we have the following results.

\begin{mythm}{\cite{meshkova2016homogenization}}\label{thm:parabolic:reiterated:error}
Suppose that $\Omega\subset\mathbb{R}^d$ is a bounded domain of class $C^{1,1}$ and $A_\varepsilon$ satisfies the assumptions \eqref{eqn:assump:elliptic}. Let $u_\varepsilon$ be the solution of \eqref{eqn:parabolic:model} and $u_0$ be the solution of \eqref{eqn:two-scale:parabolic:homogenizedmodel} for $u^0(x) \in L^2(\Omega)$ and $f\in L^p([0,T];L^2(\Omega))$, $0<T\leq\infty$ with some $1<p\leq\infty$. Then for $0<\varepsilon<\varepsilon_0$ and $0<t<T$, we have
\begin{equation}
    \left\| u_\varepsilon(\cdot,t) - u_0(\cdot,t) \right\|_{L^2(\Omega)} \leq C \varepsilon (t+\varepsilon^2)^{-1/2} \left\|u^0\right\|_{L^2(\Omega)} + C_p \theta(\varepsilon,p) \left\|f\right\|_{L^p([0,T];L^2(\Omega))},
\end{equation}
where the constants $C$, $C_p$ depend only on $\alpha$, $\beta$, $\Omega$ and $C_p$ also depends on $p$. $\theta(\varepsilon,p)$ is given by
\begin{equation}
    \theta(\varepsilon,p) := \left\{ \begin{aligned}
        &\varepsilon^{2-2/p}, \quad & 1<p<2, \\
        &\varepsilon(|\ln\varepsilon|+1)^{1/2}, \quad & p=2, \\
        &\varepsilon, \quad & 2<p\leq\infty.
    \end{aligned} \right.
\end{equation}
\end{mythm}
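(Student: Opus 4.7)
The plan is to exploit linearity of \eqref{eqn:parabolic:model} and write $u_\varepsilon = v_\varepsilon + w_\varepsilon$, where $v_\varepsilon$ solves the homogeneous problem with initial data $u^0$ and zero source, while $w_\varepsilon$ solves the inhomogeneous problem with zero initial data and source $f$; decompose $u_0$ analogously. In semigroup language, let $\mathcal{A}_\varepsilon := -\mydiv(A(\cdot/\varepsilon)\nabla\cdot)$ on $\Omega$ with Dirichlet boundary conditions, and let $\mathcal{A}_0$ be the corresponding homogenized operator with constant tensor $A^0$. Then $v_\varepsilon(t) = e^{-t\mathcal{A}_\varepsilon} u^0$ and $w_\varepsilon(t) = \int_0^t e^{-(t-s)\mathcal{A}_\varepsilon} f(\cdot,s)\,ds$, with parallel formulas for $v_0, w_0$. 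The task reduces to controlling $e^{-t\mathcal{A}_\varepsilon} - e^{-t\mathcal{A}_0}$ in the operator norm on $L^2(\Omega)$ with sharp rates in both $\varepsilon$ and $t$.

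The core input is the Birman--Suslina operator-norm resolvent estimate
\begin{equation*}
\bigl\|(\mathcal{A}_\varepsilon + I)^{-1} - (\mathcal{A}_0 + I)^{-1}\bigr\|_{L^2 \to L^2} \leq C\varepsilon,
\end{equation*}
proved via the first-order two-scale expansion $u_\varepsilon \approx u_0 + \varepsilon u_1(x,x/\varepsilon)$, Steklov smoothing of the corrector to handle boundary layers in the $C^{1,1}$ domain $\Omega$, and energy estimates. A contour representation of the analytic semigroup, combined with this resolvent bound and the uniform analyticity of $e^{-t\mathcal{A}_\varepsilon}$, then yields the parabolic operator-norm estimate
\begin{equation*}
\bigl\|e^{-t\mathcal{A}_\varepsilon} - e^{-t\mathcal{A}_0}\bigr\|_{L^2 \to L^2} \leq C\varepsilon(t+\varepsilon^2)^{-1/2},
\end{equation*}
in which the factor $(t+\varepsilon^2)^{-1/2}$ absorbs the initial-time singularity that arises because $u^0$ is only assumed to lie in $L^2(\Omega)$. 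Applying this bound to $u^0$ produces the first term in the theorem.

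For the source, Duhamel's formula gives
\begin{equation*}
w_\varepsilon(t) - w_0(t) = \int_0^t \bigl(e^{-(t-s)\mathcal{A}_\varepsilon} - e^{-(t-s)\mathcal{A}_0}\bigr) f(\cdot,s)\,ds,
\end{equation*}
and the semigroup estimate above yields
\begin{equation*}
\|w_\varepsilon(t) - w_0(t)\|_{L^2(\Omega)} \leq C\varepsilon \int_0^t (t-s+\varepsilon^2)^{-1/2} \|f(\cdot,s)\|_{L^2(\Omega)}\,ds.
\end{equation*}
H\"older's inequality in time with conjugate exponent $p'$ then produces the trichotomy encoded by $\theta(\varepsilon,p)$: for $2<p\leq\infty$ one has $p'<2$, making $(t-s+\varepsilon^2)^{-p'/2}$ uniformly integrable, so the full factor $\varepsilon$ survives; at $p=2$ the integral of $(t-s+\varepsilon^2)^{-1}$ contributes a logarithm, giving $\varepsilon(|\ln\varepsilon|+1)^{1/2}$; for $1<p<2$ the time integral scales like $\varepsilon^{2-p'}$, which when combined with the prefactor $\varepsilon$ rebalances to $\varepsilon^{2-2/p}$.

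The main obstacle, which is also the central technical contribution of \cite{meshkova2016homogenization}, is the sharp operator-norm semigroup estimate with the precise $(t+\varepsilon^2)^{-1/2}$ dependence near $t=0$, uniformly up to the boundary of $\Omega$. The interior contribution is routine once one accepts the two-scale construction $u_\varepsilon = u_0 + \varepsilon u_1(x,x/\varepsilon) + r_\varepsilon$, but controlling the boundary-layer corrector without losing the optimal rate in $\varepsilon$ requires Steklov-type smoothing and a duality argument in the style of Suslina; the subsequent Duhamel bookkeeping and H\"older splitting that generate $\theta(\varepsilon,p)$ are comparatively mechanical.
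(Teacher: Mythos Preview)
The paper does not supply its own proof of this statement: the theorem is quoted verbatim from \cite{meshkova2016homogenization} and used as a black box in the complexity analysis of Section~\ref{sec:complexity}. There is therefore nothing in the paper to compare your argument against.

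That said, your sketch is a faithful outline of the strategy actually used in \cite{meshkova2016homogenization}. The decomposition $u_\varepsilon = e^{-t\mathcal{A}_\varepsilon}u^0 + \int_0^t e^{-(t-s)\mathcal{A}_\varepsilon}f(s)\,ds$, the reduction to the operator-norm semigroup bound $\|e^{-t\mathcal{A}_\varepsilon}-e^{-t\mathcal{A}_0}\|_{L^2\to L^2}\le C\varepsilon(t+\varepsilon^2)^{-1/2}$, and the H\"older bookkeeping that generates the three regimes of $\theta(\varepsilon,p)$ are all correct and match the original source. Your arithmetic for the $1<p<2$ case also checks out: $\bigl(\int_0^t(t-s+\varepsilon^2)^{-p'/2}ds\bigr)^{1/p'}\sim \varepsilon^{2/p'-1}$, and $\varepsilon\cdot\varepsilon^{2/p'-1}=\varepsilon^{2/p'}=\varepsilon^{2-2/p}$. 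The one point worth flagging is that for $T=\infty$ (allowed in the statement) the $p>2$ case needs the exponential decay of the Dirichlet semigroup, not just integrability of $(t-s)^{-p'/2}$ near the diagonal; this is implicit in the analytic-semigroup machinery you invoke but is not stated explicitly in your sketch.
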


Assuming additionally $f\in L^\infty([0,T];L^2(\Omega))$, we have $\left\| u_\varepsilon(\cdot,t) - u_0(\cdot,t) \right\|_{L^2(\Omega)} \leq C\varepsilon$. Both the parabolic equation studied in this section and the wave equation studied in the next section exclusively involve the two-scale case, i.e. $n=1$, $\varepsilon=\varepsilon_1$.


\subsection{FEM for canonical parabolic model \eqref{eqn:parabolic:model}}

Applying finite element method to \eqref{eqn:parabolic:model}, with $\mathbf{M}, \mathbf{A}, \mathbf{F}(t)$ defined similarly as \eqref{eqn:matrix:canonical:MK}, we need to solve 
\begin{equation}\label{eqn:FEM:parabolic:canonical:model}
    \mathbf{M} \frac{d \mathbf{u}_\varepsilon(t)}{dt} + \mathbf{A} \mathbf{u}_\varepsilon(t) = \mathbf{F}(t), \quad u_\varepsilon^h(t) := \sum_{i_1,\cdots,i_d=1}^{N} u_{\varepsilon,\mathbf{i}}(t) \phi_{\mathbf{i}}, \quad \mathbf{u_{\varepsilon}}(t) = \left[u_{\varepsilon,\mathbf{i}}(t)\right]_{N^d\times 1}.
\end{equation}
The implicit Euler time discretization of \eqref{eqn:FEM:parabolic:canonical:model} is
\begin{equation}
    (\mathbf{M} + \Delta t \mathbf{A}) \mathbf{u}_{\varepsilon, j+1} = \mathbf{M} \mathbf{u}_{\varepsilon, j} + \Delta t \mathbf{F}_{j+1},
\end{equation}
where the subscript index $j$ denotes time $t_{j}=j\Delta t$, $0\leq j \leq N_T := T/\Delta t$. Let $\mathbf{U}_{\varepsilon} = [\mathbf{u}_{\varepsilon,1};\cdots;\mathbf{u}_{\varepsilon,N_T}]$, $\mathcal{F} = [\mathbf{F}_{1};\cdots;\mathbf{F}_{N_T}]$, we rewrite it as
\begin{equation}\label{eqn:parabolic:FEM:matrix:Atime}
    \mathcal{A} \mathbf{U}_\varepsilon = \Delta t \mathcal{F}, \quad \mathcal{A} = \mathcal{L}(\mathbf{M}+\Delta t \mathbf{A}, \mathbf{M}),
\end{equation}
where $\mathcal{L}(\mathbf{a}, \mathbf{b})$ is defined as
\begin{equation}
    \mathcal{L}(\mathbf{a}, \mathbf{b}) := \begin{bmatrix}
    \mathbf{a} & & & \\
    -\mathbf{b} & \mathbf{a} & & \\
     & \ddots & \ddots & & \\
     & & -\mathbf{b} & \mathbf{a}
    \end{bmatrix}_{N_T \times N_T}.
\end{equation}
Clearly we have (from Theorem \ref{thm:canonical:conditionnumber})
\begin{equation}
    s(\mathcal{A}) = 3^d + 3^d, \quad \|\mathcal{A}\|_{\max} = O(dh^{d-2}\Delta t + h^d), \quad \kappa(\mathcal{A}) \leq \frac{\lambda_{\max}(\mathbf{M}) + \Delta t\lambda_{\max}(\mathbf{A})}{\lambda_{\min}(\mathbf{M}) + \Delta t\lambda_{\min}(\mathbf{A})} \leq \frac{3^d(h+4d\beta\Delta t/h)}{1+\alpha\pi^2\Delta t} h^{-1}.
\end{equation}
Therefore $\|\mathcal{A}\|_{\max} = O(dh^{d-1})$ and $\kappa(\mathcal{A}) = O(3^d d h^{-1})$ if $\Delta t = O(h)$.

\subsection{FEM for homogenization model \eqref{eqn:two-scale:parabolic:homogenizedmodel}}

Applying finite element method to \eqref{eqn:two-scale:parabolic:homogenizedmodel}, with $\mathbf{\tilde{A}}, \mathbf{\tilde{u}}(t)$ defined similarly as \eqref{eqn:matrix:two-scale:MK} and \eqref{eqn:matrix:two-scale:uv}, we need to solve
\begin{equation}\label{eqn:FEM:parabolic:two-scale:model}
\left\{\begin{aligned}
    \mathbf{\tilde{A}}_{11} \mathbf{u}_1(t) + \mathbf{\tilde{A}}_{12} \mathbf{u}_0(t) &= \mathbf{0}, \\
    \mathbf{M} \frac{d \mathbf{u}_{0}(t)}{dt} + \mathbf{\tilde{A}}_{21} \mathbf{u}_{1}(t) + \mathbf{\tilde{A}}_{22} \mathbf{u}_{0}(t) &= \mathbf{F}(t).
\end{aligned}\right.
\end{equation}
We use the block matrix notations of $\mathbf{\tilde{A}}$. The implicit Euler time discretization of \eqref{eqn:FEM:parabolic:two-scale:model} is
\begin{equation}
\left\{\begin{aligned}
    \mathbf{\tilde{A}}_{11} \mathbf{u}_{1,j+1} + \mathbf{\tilde{A}}_{12} \mathbf{u}_{0,j+1} &= \mathbf{0}, \\
    \Delta t \mathbf{\tilde{A}}_{21} \mathbf{u}_{1,j+1} + (\mathbf{M} + \Delta t \mathbf{\tilde{A}}_{22}) \mathbf{u}_{0,j+1} &= \mathbf{M} \mathbf{u}_{0, j} + \Delta t \mathbf{F}_{j+1},
\end{aligned}\right.
\end{equation}
and can be rewritten as
\begin{equation}
    \mathcal{\tilde{A}} \mathbf{\tilde{U}} = \Delta t \mathcal{\tilde{F}}, \quad \mathcal{\tilde{A}} = \mathcal{L} \left( \mathbf{\tilde{M}} + \Delta t \mathbf{\tilde{A}}, \mathbf{\tilde{M}} \right), \quad \mathbf{\tilde{M}} := \begin{bmatrix} 
    \mathbf{0} & \mathbf{0} \\
    \mathbf{0} & \mathbf{M}
    \end{bmatrix},
\end{equation}
with $\mathbf{\tilde{U}} = [\mathbf{u}_{1,1};\mathbf{u}_{0,1};\cdots;\mathbf{u}_{1,N_T};\mathbf{u}_{0,N_T}]$, $\mathcal{\tilde{F}} = [\mathbf{0};\mathbf{F}_{1};\cdots;\mathbf{0};\mathbf{F}_{N_T}]$. Clearly we have (from Theorem \ref{thm:two-scale:conditionnumber})
\begin{equation}
    s(\mathcal{\tilde{A}}) = 3^{2d} + 3^d + 3^d, \quad \|\mathcal{A}\|_{\max} = O(dh^{d-2}\Delta t + h^d), \quad \kappa(\mathcal{\tilde{A}}) \leq \frac{\lambda_{\max}(\mathbf{\tilde{M}}) + \Delta t\lambda_{\max}(\mathbf{\tilde{A}})}{\lambda_{\min}(\mathbf{\tilde{M}}) + \Delta t\lambda_{\min}(\mathbf{\tilde{A}})} \leq \frac{3^{2d}(h^2/\Delta t+4d\beta)}{\alpha\pi^2} h^{-2}.
\end{equation}
Therefore $\|\mathcal{A}\|_{\max} = O(dh^{d-1})$ and $\kappa(\mathcal{\tilde{A}}) = O(3^{2d} d h^{-2})$ if $\Delta t = O(h)$.

\section{Multiscale Wave Equations}
\label{sec:hyperbolic}


\subsection{Canonical model and homogenization model}

We consider the following prototypical multiscale wave PDE
\begin{equation}\label{eqn:hyperbolic:model}
  \left\{ \begin{aligned}
  & \frac{\partial^2 u_\varepsilon}{\partial t^2} - \mydiv A_\varepsilon \nabla u_\varepsilon = f \quad & (x,t) \in \mathbb{R}^d\times [0,T], \\
  & u_\varepsilon(x,0) = u^0(x), \quad \partial_t u_\varepsilon(x,0) \equiv 0 & x \in \mathbb{R}^d, 
  \end{aligned}\right.
\end{equation}
where the coefficient $A_\varepsilon = A(x/\varepsilon)$ is independent of $t$ and satisfies the same assumptions as in Section \ref{sec:elliptic}. For simplicity, we make the assumptions that $u^0$ and $f$ are compactly supported, and that for $t\in[0,T_0]$, $u_\varepsilon$ is also compactly supported in $\Omega\subset\mathbb{R}^d$. The first order homogenization model
\begin{equation}\label{eqn:two-scale:hyperbolic:homogenizedmodel}
    \left\{ \begin{aligned}
    &-\mydiv_{y} A(y)\left( \nabla_x u_0(x,t) + \nabla_y u_1(x,y,t) \right) = 0, \\
    &\frac{\partial^2 u_0(x,t)}{\partial t^2}  -\mydiv_{x} \left[ \int_{Y}  A(y)\left( \nabla_x u_0(x,t) + \nabla_y u_1(x,y,t) \right) dy \right] = f(x,t), \\
    &u_1 \text{ is $Y$-periodic with respect to $y$},
    \end{aligned}\right.
\end{equation}
defines the limit solution $u_0$ of $u_\varepsilon$ \cite{bensoussan2011asymptotic}. Some previous works have studied the convergence of solutions for long-time homogenization of wave equations \cite{dohnal2015dispersive,allaire2016comparison,benoit2019long}. Here we adapt the first order results from Benoit and Gloria \cite[Theorem 2]{benoit2019long}.
\begin{mythm}\label{thm:hyperbolic:reiterated:order}
    Assume that the first order correctors are bounded (see Hypothesis 1 in \cite{benoit2019long} for details), and  $u_\varepsilon$ is also compactly supported in $\Omega\subset\mathbb{R}^d$ up to time $T_0$.
    Then for all $T_0\geq T\geq 0$
    \begin{equation}
        \sup_{0\leq t\leq T} \|u_\varepsilon - u_0\|_{L^2(\mathbb{R}^d)} \leq C(u^0,f)\left( \varepsilon+\varepsilon T 
        \right).
    \end{equation}
\end{mythm}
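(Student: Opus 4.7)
The plan is to run the classical corrector-plus-energy argument, paying particular attention to how the linear-in-$T$ loss arises for the wave operator. First I would introduce the first-order two-scale ansatz
\begin{equation*}
u_\varepsilon^{(1)}(x,t) := u_0(x,t) + \varepsilon\, u_1\bigl(x, x/\varepsilon, t\bigr), \qquad u_1(x,y,t) = \chi_j(y)\,\partial_{x_j} u_0(x,t),
\end{equation*}
where $\chi_j$ are the standard cell correctors associated with $A(y)$, so that the first equation in \eqref{eqn:two-scale:hyperbolic:homogenizedmodel} is satisfied and the effective coefficient appearing in the second equation is the usual homogenized $\bar A$. Plugging $u_\varepsilon^{(1)}$ into the wave operator and matching powers of $\varepsilon$ exactly as in \eqref{eqn:two-scale:power}, the $O(\varepsilon^{-1})$ and $O(\varepsilon^0)$ contributions cancel because $u_0$ solves the homogenized wave equation and $\chi_j$ solves the cell problem. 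What remains is a residual of the schematic form
\begin{equation*}
R_\varepsilon := \partial_{tt} u_\varepsilon^{(1)} - \mydiv\bigl(A_\varepsilon \nabla u_\varepsilon^{(1)}\bigr) - f = \varepsilon\, \rho(x, x/\varepsilon, t),
\end{equation*}
where $\rho$ bundles the cross terms of the form $A(y)(\partial_{y_j}\chi_i)(\partial_{x_i}\partial_{x_j} u_0)$, the second time derivative $\partial_{tt} u_1$, and divergence-remainder pieces involving $\nabla_x u_1$.

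Under Hypothesis~1 (boundedness of the first-order correctors) together with the regularity inherited by $u_0$ from the compactly supported data $(u^0,f)$, and using that $u_\varepsilon$ (hence $u_0$, hence $\rho$) is supported in a fixed bounded $\Omega$ on $[0,T_0]$, each piece of $\rho$ is uniformly bounded in $L^\infty(0,T_0; L^2(\mathbb{R}^d))$ by some $C(u^0,f)$, so $\|R_\varepsilon(t)\|_{L^2(\mathbb{R}^d)} \leq C(u^0,f)\,\varepsilon$ for all $t \in [0,T_0]$. The error $e_\varepsilon := u_\varepsilon - u_\varepsilon^{(1)}$ then satisfies
\begin{equation*}
\partial_{tt} e_\varepsilon - \mydiv(A_\varepsilon \nabla e_\varepsilon) = -R_\varepsilon,
\end{equation*}
with initial data $e_\varepsilon(\cdot,0) = -\varepsilon u_1(\cdot,\cdot/\varepsilon,0)$ and $\partial_t e_\varepsilon(\cdot,0) = -\varepsilon \partial_t u_1(\cdot,\cdot/\varepsilon,0)$, both of size $O(\varepsilon)$ in $L^2$.

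The standard energy identity for $\partial_{tt} - \mydiv(A_\varepsilon \nabla \cdot)$ with ellipticity $A_\varepsilon \geq \alpha I$ gives, for $E(t) := \tfrac12\int \bigl(|\partial_t e_\varepsilon|^2 + A_\varepsilon \nabla e_\varepsilon \cdot \nabla e_\varepsilon\bigr)\, dx$,
\begin{equation*}
\sqrt{E(t)} \leq \sqrt{E(0)} + \int_0^t \|R_\varepsilon(s)\|_{L^2(\mathbb{R}^d)}\, ds \leq C(u^0,f)\,(\varepsilon + \varepsilon\, t).
\end{equation*}
Since $e_\varepsilon(\cdot,t)$ inherits compact support in a bounded set from $u_\varepsilon$ and $u_\varepsilon^{(1)}$ (the latter being supported wherever $u_0$ is, and the former by assumption up to $T_0$), a Poincar\'e inequality converts the gradient control inside $E(t)$ into $\|e_\varepsilon(t)\|_{L^2} \leq C(u^0,f)(\varepsilon + \varepsilon T)$ on $[0,T]$. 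The triangle inequality together with $\varepsilon\|u_1(\cdot,\cdot/\varepsilon,t)\|_{L^2} \leq C(u^0,f)\varepsilon$ then yields the stated estimate.

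The main obstacle is confirming that $\|R_\varepsilon(t)\|_{L^2}$ is genuinely $O(\varepsilon)$ uniformly in $t$; this is exactly where Hypothesis~1 and the compact-support assumption are used, since a naive expansion of the cross terms involving $\nabla_y \chi_j$ would otherwise produce rapidly oscillating pieces whose $L^2$ norm does not improve with $\varepsilon$. The linear-in-$T$ factor is unavoidable for the wave equation because the energy identity integrates the source in time rather than amplifying it, so $\int_0^t \|R_\varepsilon\|_{L^2}\, ds \lesssim \varepsilon\, t$ is the best one can hope for at first order, and any attempt to recover $L^2$ of $e_\varepsilon$ purely by integrating $\partial_t e_\varepsilon$ would cost an extra factor of $t$ and give $\varepsilon T^2$ rather than the sharper $\varepsilon T$ attained through Poincar\'e on the compact support.
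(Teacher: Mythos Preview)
The paper does not give its own proof of this theorem; it is quoted (with a slight adaptation of the setting) from Benoit--Gloria \cite[Theorem~2]{benoit2019long}. So there is no in-paper argument to compare against, and the relevant question is whether your sketch actually reproduces that result.

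There is a genuine gap in your residual estimate. After inserting the first-order ansatz into $\partial_{tt}-\mydiv(A_\varepsilon\nabla\,\cdot\,)$, the $O(\varepsilon^{-1})$ term cancels by the cell problem, but the $O(1)$ contribution does \emph{not}: what survives is
\[
\mydiv_x\!\Bigl[\bigl(\bar A - A(y)(I+\nabla_y\chi)\bigr)\nabla_x u_0\Bigr]\Big|_{y=x/\varepsilon}
\;-\;\mydiv_y\!\bigl[A(y)\nabla_x u_1\bigr]\Big|_{y=x/\varepsilon},
\]
which is rapidly oscillating with amplitude $O(1)$ and has $L^2$ norm of order~$1$, not $\varepsilon$. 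Boundedness of the first-order correctors (Hypothesis~1) does not help here: $\|\chi\|_{L^\infty}<\infty$ controls the size of $\varepsilon u_1$ but says nothing about the mean-zero matrix $\bar A-A(I+\nabla_y\chi)$. The standard remedy, and the one used in \cite{benoit2019long}, is to bring in the flux corrector $\sigma$ (or equivalently a second-order corrector) so that this $O(1)$ residual is rewritten as the full divergence of an $O(\varepsilon)$ quantity; only then does the energy argument close at order $\varepsilon$.

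A second, related issue: your initial energy $E(0)$ is not $O(\varepsilon^2)$. You have $\nabla e_\varepsilon(\cdot,0)=-\varepsilon\nabla_x u_1-\nabla_y u_1|_{y=x/\varepsilon}$, and the $\nabla_y u_1$ piece is $O(1)$ in $L^2$. So even if the residual were handled, your energy inequality would start from $\sqrt{E(0)}=O(1)$ and never reach $O(\varepsilon+\varepsilon T)$. This is again cured by adding the next corrector layer, which cancels the $O(1)$ initial gradient mismatch. In short, the outline ``ansatz $+$ residual $+$ energy $+$ Poincar\'e'' is the right skeleton, but a first-order ansatz alone is one order too crude for both the residual and the initial data.
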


\subsection{FEM for canonical wave equations \eqref{eqn:hyperbolic:model}}

Applying finite element method to \eqref{eqn:hyperbolic:model}, with $\mathbf{M}, \mathbf{A}, \mathbf{F}(t)$ defined similarly as \eqref{eqn:matrix:canonical:MK}, we need to solve 
\begin{equation}\label{eqn:FEM:hyperbolic:canonical:model}
    \mathbf{M} \frac{d^2 \mathbf{u}_\varepsilon(t)}{dt^2} + \mathbf{A} \mathbf{u}_\varepsilon(t) = \mathbf{F}(t), \quad u_\varepsilon^h(t) := \sum_{i_1,\cdots,i_d=1}^{N} u_{\varepsilon,\mathbf{i}}(t) \phi_{\mathbf{i}}, \quad \mathbf{u_{\varepsilon}}(t) = \left[u_{\varepsilon,\mathbf{i}}(t)\right]_{N^d\times 1}.
\end{equation}
We define $\mathbf{v}_{\varepsilon} = d \mathbf{u}_{\varepsilon} / dt$ and  \eqref{eqn:FEM:hyperbolic:canonical:model} is transformed to the forced Hamiltonian system
\begin{equation}\label{eqn:FEM:hyperbolic:hamiltonian}
    \left\{ \begin{aligned}
        \frac{d \mathbf{u}_\varepsilon(t)}{dt} &= \mathbf{v}_\varepsilon(t), \\
        \frac{d \mathbf{v}_\varepsilon(t)}{dt} &= -\mathbf{M}^{-1}\mathbf{A}\mathbf{u}_\varepsilon(t) + \mathbf{M}^{-1}\mathbf{F}(t). \\
    \end{aligned} \right.
\end{equation}

The implicit midpoint scheme for \eqref{eqn:FEM:hyperbolic:hamiltonian} is given by:
\begin{equation}
    \left\{ \begin{aligned}
        \mathbf{u}_{\varepsilon,j+1} &= \mathbf{u}_{\varepsilon,j} + \Delta t  \frac{\mathbf{v}_{\varepsilon,j}+\mathbf{v}_{\varepsilon,j+1}}{2}, \\
        \mathbf{v}_{\varepsilon,j+1} &= \mathbf{v}_{\varepsilon,j} - \Delta t \mathbf{M}^{-1}\mathbf{A} \frac{\mathbf{u}_{\varepsilon,j}+\mathbf{u}_{\varepsilon,j+1}}{2} + \Delta t \mathbf{M}^{-1}\mathbf{F}_{j+\frac{1}{2}}, \\
    \end{aligned} \right.
\end{equation}
which can be written as 
\begin{equation}
    \left\{ \begin{aligned}
        \left( \mathbf{M} + \frac{\Delta t^2}{4} \mathbf{A} \right) \mathbf{u}_{\varepsilon,j+1} &= \left( \mathbf{M} - \frac{\Delta t^2}{4} \mathbf{A} \right) \mathbf{u}_{\varepsilon,j} + \Delta t \mathbf{M} \mathbf{v}_{\varepsilon,j} + \frac{\Delta t^2}{2} \mathbf{F}_{j+\frac{1}{2}}, \\
        \frac{\Delta t}{2} \mathbf{A} \mathbf{u}_{\varepsilon,j+1} + \mathbf{M} \mathbf{v}_{\varepsilon,j+1} &= -\frac{\Delta t}{2} \mathbf{A} \mathbf{u}_{\varepsilon,j} + \mathbf{M} \mathbf{v}_{\varepsilon,j} + \Delta t \mathbf{F}_{j+\frac{1}{2}}. \\
    \end{aligned} \right.
\end{equation}

Let $\mathbf{U}_{\varepsilon} = \left[\mathbf{u}_{\varepsilon,1};\mathbf{v}_{\varepsilon,1};\cdots;\mathbf{u}_{\varepsilon,N_T},\mathbf{v}_{\varepsilon,N_T}\right]$, $\mathcal{F} = \left [\frac{\Delta t}{2}\mathbf{F}_{\frac{1}{2}};  \mathbf{F}_{\frac{1}{2}};\cdots;\frac{\Delta t}{2}\mathbf{F}_{N_T-\frac{1}{2}};\mathbf{F}_{N_T-\frac{1}{2}} \right]$, we rewrite it as
\begin{equation}
    \mathcal{A} \mathbf{U}_\varepsilon = \Delta t \mathcal{F}, \quad \mathcal{A} = \mathcal{L}\left(
    \begin{bmatrix} \mathbf{M} + \frac{\Delta t^2}{4} \mathbf{A} & \mathbf{0} \\ \frac{\Delta t}{2} \mathbf{A} & \mathbf{M} \end{bmatrix},
    \begin{bmatrix} \mathbf{M} - \frac{\Delta t^2}{4} \mathbf{A} & \Delta t \mathbf{M} \\ -\frac{\Delta t}{2} \mathbf{A} & \mathbf{M} \end{bmatrix}
    \right).
\end{equation}
Then from Theorem \ref{thm:canonical:conditionnumber} and its proof we have $s(\mathcal{A}) = 4\times3^d$ and 
\begin{equation}
\begin{aligned}
    \lambda_{\max}(\mathcal{A}) &\leq \lambda_{\max}(\mathbf{M}) + \frac{\Delta t^2}{4}\lambda_{\max}(\mathbf{A}) \leq h^d + d\beta \Delta t^2 h^{d-2}, \quad  \lambda_{\min}(\mathcal{A}) = \lambda_{\min}(\mathbf{M}) \geq \frac{h^d}{3^d}, \\
    \kappa(\mathcal{A}) & \leq 3^d (1+d\beta \Delta t^2 h^{-2}), \quad \|\mathcal{A}\|_{\max} = O(dh^{d-2}\Delta t).
\end{aligned}
\end{equation}
Therefore $\|\mathcal{A}\|_{\max} = O(dh^{d-1})$ and $\kappa(\mathcal{A}) = O(3^dd)$ if $\Delta t = O(h)$.

\subsection{FEM for homogenization model \eqref{eqn:two-scale:hyperbolic:homogenizedmodel}}

Applying finite element method to \eqref{eqn:two-scale:hyperbolic:homogenizedmodel}, with $\mathbf{\tilde{A}}, \mathbf{\tilde{u}}(t)$ defined similarly as \eqref{eqn:matrix:two-scale:MK} and \eqref{eqn:matrix:two-scale:uv}, we need to solve
\begin{equation}\label{eqn:FEM:hyperbolic:two-scale:model}
\left\{\begin{aligned}
    \mathbf{\tilde{A}}_{11} \mathbf{u}_1(t) + \mathbf{\tilde{A}}_{12} \mathbf{u}_0(t) &= \mathbf{0}, \\
    \mathbf{M} \frac{d^2 \mathbf{u}_{0}(t)}{dt^2} + \mathbf{\tilde{A}}_{21} \mathbf{u}_{1}(t) + \mathbf{\tilde{A}}_{22} \mathbf{u}_{0}(t) &= \mathbf{F}(t).
\end{aligned}\right.
\end{equation}
We define $\mathbf{v}_{0} = d\mathbf{u}_{0}/dt$, then \eqref{eqn:FEM:hyperbolic:two-scale:model} can be tranformed into 
\begin{equation}\label{eqn:FEM:hyperbolic:two-scale:hamiltonian}
\left\{\begin{aligned}
    &\mathbf{\tilde{A}}_{11} \mathbf{u}_{1}(t) + \mathbf{\tilde{A}}_{12} \mathbf{u}_{0}(t) = \mathbf{0}, \\
    &\frac{d \mathbf{u}_0(t)}{dt} = \mathbf{v}_0(t), \\
    &\frac{d \mathbf{v}_0(t)}{dt} = -\mathbf{M}^{-1}\mathbf{\tilde{A}}_{21} \mathbf{u}_{1}(t) - \mathbf{M}^{-1} \mathbf{\tilde{A}}_{22} \mathbf{u}_{0}(t) + \mathbf{M}^{-1}\mathbf{F}(t). \\
\end{aligned}\right.
\end{equation}
The implicit midpoint rule of \eqref{eqn:FEM:hyperbolic:two-scale:hamiltonian} is
\begin{equation}
\left\{\begin{aligned}
    &\mathbf{\tilde{A}}_{11} \mathbf{u}_{1,j+1} + \mathbf{\tilde{A}}_{12} \mathbf{u}_{0,j+1} = \mathbf{0}, \\
    &\mathbf{u}_{0,j+1} = \mathbf{u}_{0,j} + \Delta t  \frac{\mathbf{v}_{0,j}+\mathbf{v}_{0,j+1}}{2}, \\
    &\mathbf{v}_{0,j+1} = \mathbf{v}_{0,j} - \Delta t \mathbf{M}^{-1}\mathbf{\tilde{A}}_{21} \frac{\mathbf{u}_{1,j}+\mathbf{u}_{1,j+1}}{2} - \Delta t \mathbf{M}^{-1}\mathbf{\tilde{A}}_{22} \frac{\mathbf{u}_{0,j}+\mathbf{u}_{0,j+1}}{2} + \Delta t \mathbf{M}^{-1}\mathbf{F}_{j+\frac{1}{2}}.
\end{aligned}\right.
\end{equation}
It is equivalent to 
\begin{equation}
\left\{\begin{aligned}
    &\mathbf{\tilde{A}}_{11} \mathbf{u}_{1,j+1} + \mathbf{\tilde{A}}_{12} \mathbf{u}_{0,j+1} = \mathbf{0}, \\
    &\frac{\Delta t^2}{4} \mathbf{\tilde{A}}_{21}\mathbf{u}_{1,j+1} + \left(\mathbf{M} + \frac{\Delta t^2}{4} \mathbf{\tilde{A}}_{22} \right) \mathbf{u}_{0,j+1} = -\frac{\Delta t^2}{4} \mathbf{\tilde{A}}_{21} \mathbf{u}_{1,j} + \left(\mathbf{M} - \frac{\Delta t^2}{4} \mathbf{\tilde{A}}_{22} \right) \mathbf{u}_{0,j} + \Delta t \mathbf{M} \mathbf{v}_{0,j} + \frac{\Delta t^2}{2} \mathbf{F}_{j+\frac{1}{2}}, \\
    &\frac{\Delta t^2}{4} \mathbf{\tilde{A}}_{21}\mathbf{u}_{1,j+1} + \frac{\Delta t^2}{4} \mathbf{\tilde{A}}_{22} \mathbf{u}_{0,j+1} + \frac{\Delta t}{2}\mathbf{M} \mathbf{v}_{0,j+1} = -\frac{\Delta t^2}{4} \mathbf{\tilde{A}}_{21} \mathbf{u}_{1,j} - \frac{\Delta t^2}{4} \mathbf{\tilde{A}}_{22} \mathbf{u}_{0,j} + \frac{\Delta t}{2}\mathbf{M} \mathbf{v}_{0,j} + \frac{\Delta t^2}{2} \mathbf{F}_{j+\frac{1}{2}}, \\
\end{aligned}\right.
\end{equation}
and can be written as
\begin{equation}
    \mathcal{\tilde{A}} \mathbf{\tilde{U}} = \frac{\Delta t^2}{2} \mathcal{\tilde{F}}, \quad \mathcal{\tilde{A}} = \mathcal{L}\left(
    \begin{bmatrix} \frac{\Delta t^2}{4} \mathbf{\tilde{A}}_{11} & \frac{\Delta t^2}{4} \mathbf{\tilde{A}}_{12} & \mathbf{0} \\ \frac{\Delta t^2}{4} \mathbf{\tilde{A}}_{21} & \mathbf{M} + \frac{\Delta t^2}{4} \mathbf{\tilde{A}}_{22} & \mathbf{0} \\ \frac{\Delta t^2}{4}\mathbf{\tilde{A}}_{21} & \frac{\Delta t^2}{4} \mathbf{\tilde{A}}_{22} & \frac{\Delta t}{2}\mathbf{M} \end{bmatrix},
    \begin{bmatrix} \mathbf{0} & \mathbf{0} & \mathbf{0} \\ -\frac{\Delta t^2}{4} \mathbf{\tilde{A}}_{21} & \mathbf{M} - \frac{\Delta t^2}{4} \mathbf{\tilde{A}}_{22} & \Delta t \mathbf{M} \\ -\frac{\Delta t^2}{4} \mathbf{\tilde{A}}_{21} & -\frac{\Delta t^2}{4} \mathbf{\tilde{A}}_{22} & \frac{\Delta t}{2}\mathbf{M} \end{bmatrix}
    \right).
\end{equation}
with $\mathbf{\tilde{U}} = \left[\mathbf{u}_{1,1};\mathbf{u}_{0,1};\mathbf{v}_{0,1};\cdots;\mathbf{u}_{1,N_T};\mathbf{u}_{0,N_T};\mathbf{v}_{0,N_T}\right]$, $\mathcal{\tilde{F}} = \left[\mathbf{0};\mathbf{F}_{\frac{1}{2}};\mathbf{F}_{\frac{1}{2}};\cdots;\mathbf{0};\mathbf{F}_{N_T-\frac{1}{2}};\mathbf{F}_{N_T-\frac{1}{2}} \right]$. Then from Theorem \ref{thm:two-scale:conditionnumber} and its proof we have $s(\mathcal{A}) = 2\times(3^{2d}+3^d+3^d)$ and 
\begin{equation}
\begin{aligned}
    \lambda_{\max}(\mathcal{\tilde{A}}) &\leq \lambda_{\max}(\mathbf{M}) + \frac{\Delta t^2}{4}\lambda_{\max}(\mathbf{\tilde{A}}) \leq h^d + d\beta \Delta t^2 h^{d-2}, \quad  \lambda_{\min}(\mathcal{A}) \geq \frac{\Delta t^2}{4} \lambda_{\min}(\mathbf{\tilde{A}}) \geq \frac{\Delta t^2h^d}{4\alpha\pi^2 3^{2d}}, \\
    \kappa(\mathcal{A}) & \leq 4\alpha\pi^2 3^{2d} (h^2/\Delta t^2 + d\beta) h^{-2}, \quad \|\mathcal{A}\|_{\max} = O(dh^{d-2}\Delta t^2).
\end{aligned}
\end{equation}
Therefore $\|\mathcal{A}\|_{\max} = O(dh^{d})$ and $\kappa(\mathcal{\tilde{A}}) = O(3^{2d}dh^{-2})$ if $\Delta t = O(h)$.

\section{Complexity Analysis}
\label{sec:complexity}

The aim of this section is to determine the computational cost associated with obtaining a numerical solution $u_\varepsilon^h$, which satisfies the following condition:
\begin{equation}
    \| u_\varepsilon - u_\varepsilon^h \|_{L^2(\Omega)} \leq \delta,
\end{equation}
where $\delta$ is the threshold and $\varepsilon$ is the microstructure lengthscale. To be precise, we take the quantum solution as an example, denoting  
\begin{equation}
    u_\varepsilon^h := \sum_{1\leq \mathbf{i} \leq N} u_{0,\mathbf{i}}^t \phi_{\mathbf{i}}(x), \quad u_0^h(t) := \sum_{1\leq \mathbf{i} \leq N} u_{0,\mathbf{i}}(t) \phi_{\mathbf{i}}(x),
\end{equation}
with $u_{0,\mathbf{i}}(t)$  the solution of \eqref{eqn:quantum:model}, $w_{0,\mathbf{i}}(t)$ the solution of \eqref{eqn:quantum:schrodingermodel}, $u_{0,\mathbf{i}}^t$ the quantum approximation of $u_{0,\mathbf{i}}(t)$ and $w_{0,\mathbf{i}}^t$ the quantum approximation of $w_{0,\mathbf{i}}(t)$. The error can be divided into different parts, \begin{equation}
    \|u_\varepsilon - u_\varepsilon^h\| \leq \|u_\varepsilon - u_0\| + \|u_0 - u_0^h\| + \| u_0^h - u_0^h(t)\| + \| u_0^h(t) - u_\varepsilon^h\|,
\end{equation}
where $\|\cdot\| = \|\cdot \|_{L^2(\Omega)}$. The first and second terms can be estimated using Theorem \ref{thm:two-scale:error} or Theorem \ref{thm:reiterated:error} and Theorem \ref{thm:canonical:error} respectively. During the quantum simulation, we obtain $w_{0,\mathbf{i}}^t$ first and then reconstruct $u_{0,\mathbf{i}}^t$. Due to the equivalence of error as shown in \eqref{eqn:quantum:errorw}, the third term can be estimated with
\begin{equation}
    \left\| u_0^h - u_0^h(t)\right\| = \left\| \sum_{1\leq \mathbf{i} \leq N} (u_{0,\mathbf{i}} - u_{0,\mathbf{i}}(t)) \phi_{\mathbf{i}}(x) \right\| \leq C \left\| \mathbf{u}(t) - \mathbf{u}_{\infty} \right\|_2 \leq C \left\| \mathbf{w}(t) - \mathbf{w}_{\infty} \right\|_2.
\end{equation}
Similarly, the fourth term is estimated with
\begin{equation}
    \| u_0^h(t) - u_\varepsilon^h\| \leq C \| \mathbf{u}(t) - \mathbf{u}^t \|_2 \leq C \| \mathbf{w}(t) - \mathbf{w}^t \|_2,
\end{equation}
which corresponds to the precision of quantum simulation. We will require each part of the error to be $O(\delta)$. For simplicity, we choose $\varepsilon_k=\varepsilon_1^k, k=1,\cdots,n$ and $\delta = O(\varepsilon_1)$. Therefore the homogenization error is obviously $O(\delta)$ from the convergence results Theorem \ref{thm:reiterated:error}, Theorem \ref{thm:parabolic:reiterated:error} and Theorem \ref{thm:hyperbolic:reiterated:order}.


Table \ref{tab:cost:comparison} provides a summary of the detailed results for comparison purposes.
\begin{table}[htbp]
    \centering
    \begin{tabular}{cccc}
        \hline
        \hline
        $(n+1)$-scale PDE & Classical cost $\mathcal{C}$ & Quantum cost $\mathcal{Q}$ & $\mathcal{C}/\mathcal{Q}$ \\
        \hline
        
         & & & \\
        Elliptic equation & & & \\
        canonical & $\tilde{O}\left(3^{\frac{3d}{2}}d^{\frac{1}{2}} \varepsilon_1^{-\frac{(n+1)(d+1)}{2}}\right)$ & $\tilde{O}\left(3^{2d} d^{\frac{11}{2}} \varepsilon_1^{-(n+2)}\right)$ & $
        \tilde{O}\left(3^{-\frac{d}{2}}d^{-5}\varepsilon_1^{-\frac{(n+1)(d-1)}{2}+1} \right)$ \\
        
        homogenization & $\tilde{O}\left(3^{\frac{3(n+1)d}{2}}d^{\frac{1}{2}}\varepsilon_1^{-\frac{(n+1)d+1}{2}}\right)$ & $\tilde{O}\left(3^{2(n+1)d} n^{\frac{9}{2}}d^{\frac{11}{2}} \varepsilon_1^{-2}\right)$ & $\tilde{O}\left(3^{-\frac{(n+1)d}{2}} n^{-\frac{9}{2}}d^{-5} \varepsilon_1^{\frac{3-(n+1)d}{2} }\right)$ \\
        
         & & & \\
        Parabolic equation ($n=1$) & & & \\
        canonical & $\tilde{O}\left(3^{\frac{3d}{2}}d^{\frac{1}{2}} \varepsilon_1^{-d-\frac{3}{2}}\right)$ & $\tilde{O}\left(3^{2d} d^{\frac{11}{2}} \varepsilon_1^{-2}\right)$ & $
        \tilde{O}\left(3^{-\frac{d}{2}}d^{-5}\varepsilon_1^{-d+\frac{1}{2}}\right)$ \\
        
        homogenization & $\tilde{O}\left(3^{3d}d^{\frac{1}{2}}\varepsilon_1^{-d-\frac{3}{2}}\right)$ & $\tilde{O}\left(3^{4d} d^{\frac{11}{2}} \varepsilon_1^{-2}\right)$ & $\tilde{O}\left(3^{-d} d^{-5} \varepsilon_1^{-d+\frac{1}{2}}\right)$ \\

         & & & \\
        Wave equation ($n=1$) & & & \\
        canonical & $\tilde{O}\left(3^{\frac{3d}{2}}d^{\frac{1}{2}} \varepsilon_1^{-d-1}\right)$ & $\tilde{O}\left(3^{2d} d^{\frac{11}{2}} \varepsilon_1^{-2}\right)$ & $
        \tilde{O}(3^{-\frac{d}{2}}d^{-5}\varepsilon_1^{-d+1})$ \\
        
        homogenization & $\tilde{O}\left(3^{3d}d^{\frac{1}{2}}\varepsilon_1^{-d-\frac{3}{2}}\right)$ & $\tilde{O}\left(3^{4d} d^{\frac{11}{2}} \varepsilon_1^{-2}\right)$ & $\tilde{O}(3^{-d} d^{-5} \varepsilon_1^{-d+\frac{1}{2}})$ \\

        \hline
    \end{tabular}
    \caption{The time complexities of classical and quantum methods for the multiscale models are summarized in the table, where $d$ denotes the spatial dimension, and $n$ denotes the level of multiple scales.}
    \label{tab:cost:comparison}
\end{table}

We note that for $n=1$, the homogenization does not help, or is even worse for parabolic and wave equations. For general $n$-scale case for the  elliptic equations, the classical cost is larger by order $n/2$ w.r.t. $\varepsilon_1$. However the condition number of the canonical equation is $1$ or $2$ orders smaller than the condition number of the homogenized equations for parabolic and wave equations. This will reduce the classical canonical cost by $1/2$ or $1$ order, resulting in the extra costs for the homogenized models. 

The intermediate steps of the complexity calculation for each model are presented below. We begin by expressing the parameters, such as mesh size or time step, that need to be set specifically as functions of $\varepsilon$ and $\delta$ (up to a constant). Next, we substitute these parameters into the corresponding complexity formula of classical or quantum algorithms to obtain the final result. It is important to note that these calculations do not include the cost of initial data encoding or quantum measurement costs at the output.

\subsection{Multiscale elliptic model}
\begin{itemize}
    \item \textbf{Canonical model - Classical method} 
    \begin{itemize}
        \item discretisation error: 
        \begin{equation}
            O(h^2/\varepsilon_{n}) \leq \delta \quad \Rightarrow \quad h = O(\sqrt{\delta \varepsilon_{n}}).
        \end{equation}
        
        \item classical cost: 
        \begin{equation}
            \mathcal{C}_{can} = O\left(s(\mathbf{A})N^d\sqrt{\kappa(\mathbf{A})}\log(1/\delta)\right) = \tilde{O}\left(3^{\frac{3d}{2}}d^{\frac{1}{2}} (\delta\varepsilon_{n})^{-\frac{d+1}{2}}\right) = \tilde{O}\left(3^{\frac{3d}{2}} d^{\frac{1}{2}} \varepsilon_1^{-\frac{(n+1)(d+1)}{2}}\right).
        \end{equation}
    \end{itemize}
    
    \item \textbf{Canonical model - Quantum method} 
    \begin{itemize}
        \item discretisation error: 
        \begin{equation}
        \begin{aligned}
            O(h^2/\varepsilon_{n}) \leq \delta \quad &\Rightarrow \quad h = O(\sqrt{\delta \varepsilon_{n}}), \\
            O(e^{-\lambda_{\min}(\mathbf{A})t}) \leq \delta \quad &\Rightarrow \quad t=O(-\log(\delta)/\lambda_{\min}(\mathbf{A})), \\
            O(\Delta p) \leq \delta \quad &\Rightarrow \quad \Delta p = O(\delta).
        \end{aligned}
        \end{equation}

        \item quantum cost:
        \begin{equation}
        \begin{aligned}
            &\tau = s\|\mathbf{H}_{total}\|_{\max}t = O(3^{2d}dh^{-2}\log\delta^{-1}/\Delta p), \quad m_{\mathbf{H}_{total}} = O(d\log N + \log(1/\Delta p)), \\
            &\mathcal{Q}_{can} = O\left( \tau m_{\mathbf{H}_{total}} \cdot \mathrm{polylog} \right) = \tilde{O}\left(3^{2d} d^{\frac{11}{2}} \varepsilon_{n}^{-1} \delta^{-2}\right)=\tilde{O}\left(3^{2d} d^{\frac{11}{2}} \varepsilon_1^{-(n+2)}\right).
        \end{aligned}
        \end{equation}
    \end{itemize}

    \item \textbf{Homogenization model - Classical method}  
    \begin{itemize}
        \item discretisation error: 
        \begin{equation}
            O(h^2) \leq \delta \Rightarrow h = O(\sqrt{\delta})
        \end{equation}
        
        \item classical cost: 
        \begin{equation}
            \mathcal{C}_{hom} = O\left(s(\mathbf{\tilde{A}})\sum_{k=1}^{n+1}N^{kd}\sqrt{\kappa(\mathbf{\tilde{A}})}\log(1/\delta)\right) = \tilde{O}\left(3^{\frac{3(n+1)d}{2}}d^{\frac{1}{2}}\delta^{-\frac{(n+1)d+1}{2}}\right) = \tilde{O}\left(3^{\frac{3(n+1)d}{2}}d^{\frac{1}{2}}\varepsilon_1^{-\frac{(n+1)d+1}{2}}\right).
        \end{equation}
    \end{itemize}
    
    \item \textbf{Homogenization model - Quantum method}
    \begin{itemize}
        \item discretisation error: 
        \begin{equation}
        \begin{aligned}
            O(h^2) \leq \delta \quad &\Rightarrow \quad h = O(\sqrt{\delta}), \\
            O(e^{-\lambda_{\min}(\mathbf{\tilde{A}})t}) \leq \delta \quad &\Rightarrow \quad t=O(-\log(\delta)/\lambda_{\min}(\mathbf{\tilde{A}})), \\
            O(\Delta p) \leq \delta \quad &\Rightarrow \quad \Delta p = O(\delta).
        \end{aligned}
        \end{equation}
        
        \item quantum cost: 
        \begin{equation}
        \begin{aligned}
            &\tau = O(3^{2(n+1)d}dh^{-2}\log\delta^{-1}/\Delta p), \quad m_{\mathbf{H}_{total}} = O(nd\log N + \log(1/\Delta p)), \\
            &\mathcal{Q}_{hom} = O\left( \tau m_{\mathbf{H}_{total}} \cdot \mathrm{polylog} \right) = \tilde{O}\left(3^{2(n+1)d} n^{\frac{9}{2}}d^{\frac{11}{2}} \delta^{-2}\right)=\tilde{O}\left(3^{2(n+1)d} n^{\frac{9}{2}}d^{\frac{11}{2}} \varepsilon_1^{-2}\right).
        \end{aligned}
        \end{equation}
    \end{itemize}
\end{itemize}

\subsection{Multiscale parabolic model}
\begin{itemize}
    \item \textbf{Canonical model - Classical method} 
    \begin{itemize}
        \item discretisation error: 
        \begin{equation}
        \begin{aligned}
            O(h^2/\varepsilon_{1}) \leq \delta \quad &\Rightarrow \quad h = O(\sqrt{\delta \varepsilon_{1}}), \\
            O(\Delta t) \leq \delta \quad &\Rightarrow \quad \Delta t = O(\delta).
        \end{aligned}
        \end{equation}
        
        \item classical cost: \begin{equation}
            \mathcal{C}_{can} = O\left(s(\mathbf{A})N^dN_T\sqrt{\kappa(\mathbf{A})}\log(1/\delta)\right) = \tilde{O}\left(3^{\frac{3d}{2}}d^{\frac{1}{2}} (\delta\varepsilon_{1})^{-\frac{2d+1}{4}} T/\Delta t \right) = \tilde{O}\left(3^{\frac{3d}{2}}d^{\frac{1}{2}} \varepsilon_1^{-d-\frac{3}{2}}\right).
        \end{equation}
    \end{itemize}
    
    \item \textbf{Canonical model - Quantum method} 
    \begin{itemize}
        \item discretisation error: 
        \begin{equation}
        \begin{aligned}
            O(h^2/\varepsilon_{1}) \leq \delta \quad &\Rightarrow \quad h = O(\sqrt{\delta \varepsilon_{1}}), \\
            O(\Delta t) \leq \delta \quad &\Rightarrow \quad \Delta t = O(\delta), \\
            O(e^{-\lambda_{\min}(\mathbf{A})t}) \leq \delta \quad &\Rightarrow \quad t=O(-\log(\delta)/\lambda_{\min}(\mathbf{A})), \\
            O(\Delta p) \leq \delta \quad &\Rightarrow \quad \Delta p = O(\delta).
        \end{aligned}
        \end{equation}

        \item quantum cost:
        \begin{equation}
        \begin{aligned}
            &\tau = s\|\mathbf{H}_{total}\|_{\max}t = O(3^{2d}dh^{-1}\log\delta^{-1}/\Delta p), \quad m_{\mathbf{H}_{total}} = O(d\log N + \log N_T + \log(1/\Delta p)), \\
            &\mathcal{Q}_{can} = O\left( \tau m_{\mathbf{H}_{total}} \cdot \mathrm{polylog} \right) = \tilde{O}\left(3^{2d} d^{\frac{11}{2}} \varepsilon_{1}^{-\frac{1}{2}} \delta^{-\frac{3}{2}}\right)=\tilde{O}\left(3^{2d} d^{\frac{11}{2}} \varepsilon_1^{-2}\right).
        \end{aligned}
        \end{equation}
    \end{itemize}

    \item \textbf{Homogenization model - Classical method}  
    \begin{itemize}
        \item discretisation error: 
        \begin{equation}
        \begin{aligned}
            O(h^2) \leq \delta \quad &\Rightarrow \quad h = O(\sqrt{\delta}), \\
            O(\Delta t) \leq \delta \quad &\Rightarrow \quad \Delta t = O(\delta).
        \end{aligned}
        \end{equation}
        
        \item classical cost: 
        \begin{equation}
            \mathcal{C}_{hom} = O\left(s(\mathbf{\tilde{A}})(N^{2d}+N^d)N_T\sqrt{\kappa(\mathbf{\tilde{A}})}\log(1/\delta)\right) = \tilde{O}\left(3^{3d}d^{\frac{1}{2}}\delta^{-d-\frac{1}{2}}T/\Delta t\right) = \tilde{O}\left(3^{3d}d^{\frac{1}{2}}\varepsilon_1^{-d-\frac{3}{2}}\right).
        \end{equation}
    \end{itemize}
    
    \item \textbf{Homogenization model - Quantum method}
    \begin{itemize}
        \item discretisation error: 
        \begin{equation}
        \begin{aligned}
            O(h^2) \leq \delta \quad &\Rightarrow \quad h = O(\sqrt{\delta}), \\
            O(\Delta t) \leq \delta \quad &\Rightarrow \quad \Delta t = O(\delta), \\
            O(e^{-\lambda_{\min}(\mathbf{\tilde{A}})t}) \leq \delta \quad &\Rightarrow \quad t=O(-\log(\delta)/\lambda_{\min}(\mathbf{\tilde{A}})), \\
            O(\Delta p) \leq \delta \quad &\Rightarrow \quad \Delta p = O(\delta).
        \end{aligned}
        \end{equation}
        
        \item quantum cost: 
        \begin{equation}
        \begin{aligned}
            &\tau = O(3^{4d}dh^{-2}\log\delta^{-1}/\Delta p), \quad m_{\mathbf{H}_{total}} = O(nd\log N + \log N_T + \log(1/\Delta p)), \\
            &\mathcal{Q}_{hom} = O\left( \tau m_{\mathbf{H}_{total}} \cdot \mathrm{polylog} \right) = \tilde{O}\left(3^{4d}d^{\frac{11}{2}} \delta^{-2}\right)=\tilde{O}
            \left(3^{4d}d^{\frac{11}{2}} \varepsilon_1^{-2}\right).
        \end{aligned}
        \end{equation}
    \end{itemize}
\end{itemize}

\subsection{Multiscale wave equations}
\begin{itemize}
    \item \textbf{Canonical model - Classical method} 
    \begin{itemize}
        \item discretisation error: 
        \begin{equation}
        \begin{aligned}
            O(h^2/\varepsilon_{1}) \leq \delta \quad &\Rightarrow \quad h = O(\sqrt{\delta \varepsilon_{1}}), \\
            O(\Delta t) \leq \delta \quad &\Rightarrow \quad \Delta t = O(\delta).
        \end{aligned}
        \end{equation}
        
        \item classical cost: \begin{equation}
            \mathcal{C}_{can} = O\left(s(\mathbf{A})N^dN_T\sqrt{\kappa(\mathbf{A})}\log(1/\delta)\right) = \tilde{O}\left(3^{\frac{3d}{2}} d^{\frac{1}{2}} (\delta\varepsilon_{1})^{-\frac{d}{2}} T/\Delta t\right) = \tilde{O}\left(3^{\frac{3d}{2}} d^{\frac{1}{2}} \varepsilon_1^{-d-1}\right).
        \end{equation}
    \end{itemize}
    
    \item \textbf{Canonical model - Quantum method} 
    \begin{itemize}
        \item discretisation error: 
        \begin{equation}
        \begin{aligned}
            O(h^2/\varepsilon_{1}) \leq \delta \quad &\Rightarrow \quad h = O(\sqrt{\delta \varepsilon_{1}}), \\
            O(\Delta t) \leq \delta \quad &\Rightarrow \quad \Delta t = O(\delta), \\
            O(e^{-\lambda_{\min}(\mathbf{A})t}) \leq \delta \quad &\Rightarrow \quad t=O(-\log(\delta)/\lambda_{\min}(\mathbf{A})), \\
            O(\Delta p) \leq \delta \quad &\Rightarrow \quad \Delta p = O(\delta).
        \end{aligned}
        \end{equation}

        \item quantum cost:
        \begin{equation}
        \begin{aligned}
            &\tau = s\|\mathbf{H}_{total}\|_{\max}t = O(3^{2d}dh^{-1}\log\delta^{-1}/\Delta p), \quad m_{\mathbf{H}_{total}} = O(d\log N + \log N_T + \log(1/\Delta p)), \\
            &\mathcal{Q}_{can} = O\left( \tau m_{\mathbf{H}_{total}} \cdot \mathrm{polylog} \right) = \tilde{O}\left(3^{2d} d^{\frac{11}{2}} \varepsilon_1^{-\frac{1}{2}} \delta^{-\frac{3}{2}}\right)=\tilde{O}\left(3^{2d} d^{\frac{11}{2}} \varepsilon_1^{-2}\right).
        \end{aligned}
        \end{equation}
    \end{itemize}

    \item \textbf{Homogenization model - Classical method}  
    \begin{itemize}
        \item discretisation error: 
        \begin{equation}
        \begin{aligned}
            O(h^2) \leq \delta \quad &\Rightarrow \quad h = O(\sqrt{\delta}), \\
            O(\Delta t) \leq \delta \quad &\Rightarrow \quad \Delta t = O(\delta).
        \end{aligned}
        \end{equation}
        
        \item classical cost: 
        \begin{equation}
            \mathcal{C}_{hom} = O\left(s(\mathbf{\tilde{A}})(N^{2d}+N^d)N_T\sqrt{\kappa(\mathbf{\tilde{A}})}\log(1/\delta)\right) = \tilde{O}\left(3^{3d} d^{\frac{1}{2}}\delta^{-d-\frac{1}{2}}T/\Delta t\right) = \tilde{O}\left(3^{3d}d^{\frac{1}{2}}\varepsilon_1^{-d-\frac{3}{2}}\right).
        \end{equation}
    \end{itemize}
    
    \item \textbf{Homogenization model - Quantum method}
    \begin{itemize}
        \item discretisation error: 
        \begin{equation}
        \begin{aligned}
            O(h^2) \leq \delta \quad &\Rightarrow \quad h = O(\sqrt{\delta}), \\
            O(\Delta t) \leq \delta \quad &\Rightarrow \quad \Delta t = O(\delta), \\
            O(e^{-\lambda_{\min}(\mathbf{\tilde{A}})t}) \leq \delta \quad &\Rightarrow \quad t=O(-\log(\delta)/\lambda_{\min}(\mathbf{\tilde{A}})), \\
            O(\Delta p) \leq \delta \quad &\Rightarrow \quad \Delta p = O(\delta).
        \end{aligned}
        \end{equation}
        
        \item quantum cost: 
        \begin{equation}
        \begin{aligned}
            &\tau = O(3^{4d}dh^{-2}\log\delta^{-1}/\Delta p), \quad m_{\mathbf{H}_{total}} = O(nd\log N + \log N_T + \log(1/\Delta p)), \\
            &\mathcal{Q}_{hom} = O\left( \tau m_{\mathbf{H}_{total}} \cdot \mathrm{polylog} \right) = \tilde{O}\left(3^{4d}d^{\frac{11}{2}} \delta^{-2}\right)=\tilde{O}\left(3^{4d}d^{\frac{11}{2}} \varepsilon_1^{-2}\right).
        \end{aligned}
        \end{equation}
    \end{itemize}
\end{itemize}

\begin{myremark}
    We would like to clarify that our quantum cost analysis only includes the cost of quantum subroutines, specifically the cost of producing quantum states, and does not include the same data as in classical computation. Quantum measurement steps have  not been taken into account in our analysis. Therefore, the comparison we have made does not provide a complete picture of the actual quantum costs. The total quantum costs should also include the cost of encoding initial classical data into quantum states and the quantum measurement costs at output.
\end{myremark}

\section{Conclusion}
\label{sec:conclusion}

In this paper, we investigate the numerical methods for solving multiscale PDEs, including elliptic, parabolic and wave equations,  and compare the computational complexities of classical and quantum methods applied to both the canonical models and the homogenization models. 

We have conducted a thorough analysis of multiscale elliptic problems and demonstrated that quantum advantages can be achieved in both the canonical and homogenization model methods with respect to the scaling parameter $\varepsilon_1$. Specifically, the classical computations for both methods have more complexities than $\tilde{O}\left(\varepsilon_1^{-(n+1)d/2}\right)$, making the problem infeasible for large $n$ (the number of multiple scales) or $d$ (the spatial dimension). On the other hand, the quantum cost for the canonical model, with respect to $\varepsilon_1$, has a complexity of $\tilde{O}\left(\varepsilon_1^{-(n+2)}\right)$, which is {\it independent} of $d$ but still dependent on $n$. This makes it a suitable option for high-dimensional problems with only a few scales. However, for large $n$, the quantum algorithm for the homogenization model is highly recommended since its complexity depends only on $3^{2nd}\text{poly}(n)$.

A notable observation is that the constant $3^{(n+1)d}$ appears in both the classical and quantum costs of the homogenization model. This is attributed to the sparsity of $\mathbf{\tilde{A}}$, and has a considerable effect when the complexities are of the same order with respect to $\varepsilon_1$ as that of the canonical model. However, this factor can be disregarded when comparing the quantum costs of elliptic models, as it grows much slower than $\varepsilon_1^{-n}$.

To better understand the complexities of classical and quantum methods for multiscale parabolic and wave equations, we limit our analysis to two-scale problems with $n=1$. It is important to note that while estimates for multiscale elliptic equations are relatively well-developed, quantitative estimates for multiscale parabolic and wave problems with $n>1$ are not as advanced. Quantum advantages can be attained in all cases, as the order of complexity with respect to $\varepsilon_1$ in quantum algorithms remains {\it independent} of the dimensionality $d$. Remarkably, the homogenization does not help, and may even worsen the performance for parabolic and wave equations. If we consider the general $n$-scale case and refer to the results from elliptic equations, the classical cost is larger by a factor of $n/2$ with respect to $\varepsilon_1$. However, for parabolic and wave equations, the condition number of the canonical equation is typically one or two orders of magnitude smaller than that of the homogenized equation. This leads to a reduction in the classical canonical cost by $1/2$ or $1$ order. Our existing findings for $n=1$ are consistent with this observation. For $n\geq 2$, it is expected that the homogenization model will be less expensive than the canonical model, but this remains to be mathematically justified. 

The complexity results presented in this paper rely on the convergence of the finite element solution and the reiterated homogenization model.  It is possible to use higher-order finite element methods to obtain similar results. Nonetheless, establishing the higher-order corrector approximation of the multiscale ($n\geq 2$) homogenization solutions remains an open question.

\section*{Acknowledgments}
SJ thanks Christoph Schwab for some interesting references on elliptic homogenization. LZ thanks Wenjia Jing and Jinping Zhuge for stimulating discussions on quantitative homogenization results. SJ was partially supported by the NSFC grant No. 12031013, the Shanghai Municipal Science and Technology Major Project (2021SHZDZX0102). LZ was partially supported by the NSFC grant No. 12271360, the Shanghai Municipal Science and Technology Project 22JC1401600, and the Fundamental Research Funds for the Central Universities.

\appendix

\bibliographystyle{plain}
\bibliography{ref.bib}

\end{document}